\documentclass[12pt, twoside, leqno]{amsart}
\usepackage{amsmath,amsthm}
\usepackage{amssymb,latexsym}
\usepackage{graphicx}
\usepackage{xcolor}
\usepackage{tikz}
\theoremstyle{plain}
\newtheorem{thm}{Theorem}[section]
\newtheorem{corollary}[thm]{Corollary}
\newtheorem{lemma}[thm]{Lemma}

\newtheorem{prop}[thm]{Proposition}
\theoremstyle{definition}

\newtheorem*{testmassive}{Test of massiveness}

\newtheorem{example}[thm]{Example}
\newcommand{\ud}{\mathrm{d}}

\newcommand{\gP}{\mathbb{P}}

\newcommand{\gN}{\mathbb{N}}
\newcommand{\gZ}{\mathbb{Z}}

\newcommand{\cP}{\mathcal{P}}
\newcommand{\cA}{\mathcal{A}}
\newcommand{\cE}{\mathcal{E}}
\newcommand{\cF}{\mathcal{F}}
\newcommand{\cT}{\mathcal{T}}
\newcommand{\cL}{\mathcal{L}}

\newcommand{\cM}{\mathcal{M}}
\newcommand{\cN}{\mathcal{N}}

\newcommand{\kap}{\operatorname*{Cap_\alpha}}
\numberwithin{equation}{section}
\textheight=22cm
\textwidth=14cm
\hoffset=-1cm
\parindent=16pt
\begin{document}
\title{On massive sets \\for subordinated random walks}
\author[A. BENDIKOV]{ALEXANDER BENDIKOV}
	\thanks{Research of A. Bendikov was supported by National Science Centre, Poland, Grant DEC-
2012/05/B/ST1/00613}
\author[W. CYGAN]{WOJCIECH CYGAN}
	\thanks{Research of W. Cygan was supported by National Science Centre, Poland, Grant DEC-
2013/11/N/ST1/03605 and by SFB 701 of German Research Council}
\address{A. Bendikov \\ Institute of Mathematics\\
	Wroc\l{}‚aw University\\
	50-384 Wroc\l{}aw, Pl. Grunwaldzki 2/4, Poland}
\email{bendikov@math.uni.wroc.pl}
\address{W. Cygan \\ Institute of Mathematics\\
	Wroc\l{}aw University\\
	50-384 Wroc\l{}aw, Pl. Grunwaldzki 2/4, Poland}
\email{cygan@math.uni.wroc.pl}
\begin{abstract}
We study massive (reccurent) sets with respect to a certain random walk
$S_\alpha $ defined on the integer lattice $\mathbb{Z} ^d$, $d=1,2$. Our random walk
$S_\alpha $ is obtained from the simple random walk $S$ on $\mathbb{Z} ^d$ by the procedure
of discrete subordination. $S_\alpha $ can be regarded as a discrete
space and time counterpart of the symmetric $\alpha $-stable L\'{e}vy process in
$\mathbb{R}^d$.
 In the case $d=1$ we show that some remarkable proper subsets of $\mathbb{Z}$ , e.g.
the set $\mathcal{P}$ of primes, are massive whereas some proper subsets of $\mathcal{P}$ 
such as Leitmann primes $\mathcal{P}_h$ are massive/non-massive depending on
the function $h$. Our results can be regarded as an extension of the
results of McKean (1961) about massiveness of the set of primes for the
simple random walk in $\mathbb{Z}^3$.
 In the case $d=2$ we study massiveness of thorns and their proper subsets.
The case $d>2$ is presented in the recent paper
\textsc{Bendikov and Cygan} \cite{Cygan}.
\end{abstract}
\maketitle
\tableofcontents  
\renewcommand{\thefootnote}{}

\footnote{2010 \emph{ Mathematics Subject Classification}: 31A15, 60J45, 05C81.}

\footnote{\emph{Key words and phrases}: capacity, Green function, random walk, regular variation, subordination.}
\renewcommand{\thefootnote}{\arabic{footnote}}
\setcounter{footnote}{0}
\section{Introduction}
\par The purpose of this paper is to study massive (recurrent) sets with respect to a certain class of random walks on the integer lattice $\gZ ^d$ which are driven by low moment measures. Recall that some exhaustive results about massive sets with respect to random walks having finite second moment have been obtained in the middle of the last century, see for instance \textsc{Spitzer} \cite{spitzer}, \textsc{It\^{o} and McKean} \cite{ItoMcKean}, \textsc{Doney} \cite{doney}, \textsc{McKean} \cite{McKean}, \textsc{Dynkin and Yuskhevich} \cite{dynkin}.  
\par Perhaps the simplest way to build a low moment random walk is to use the Bochner's idea of subordination (random change of time). Subordination has been used successfully in the context of continuous time Markov processes. Recently the idea of subordination has been used by Bendikov and Saloff-Coste \cite{lscbendikov} in the context of discrete time Markov chains.
\par We recall briefly the construction of a subordinated random walk form \cite[Section 2]{lscbendikov}. Let a random walk $X=(X_n)_{n\geq 0}$ with the state space $\gZ ^d$ be given. Let $\tau =(\tau _n)_{n\geq 0}$ be a random walk on $\gZ ^+$. We assume that $\tau $ and $X$ are independent. The subordinated random walk $Y=(Y_n)$ is defined as $ Y_n = X_{\tau _n}$. Notice that even if $X$ has finite second moment the subordinated random walk $Y$ may well have infinite second moment. That is what happens in the basic example of the paper \textsc{Bendikov and Cygan} \cite{Cygan}: $X$ is the symmetric simple random walk in $\gZ ^d$ (denoted by $S$) and $\tau $ is a discrete version of the classical $\alpha /2$-stable subordinator, $0<\alpha <2$. In this case $Y$ is called the \textit{$\alpha$-stable random walk} and is denoted $S_\alpha$, see \cite[Section 1, Definition 1.2]{Cygan} for details. If we denote by $P$ the transition operator of the random walk $S$, then by \cite[Proposition 2.3]{lscbendikov} the transition operator $P_\alpha$ of $S_\alpha $ is
\begin{align*}
P_\alpha = I -(I-P)^{\alpha /2}.
\end{align*}
Moreover, if $p(n,x)$ is the transition function of the symmetric simple random walk $S$, \cite[Proposition 2.3]{lscbendikov} implies that the transition function $p_\alpha (n,x)$ of the $\alpha $-stable random walk has the form
\begin{align*}
p_\alpha (n,x) = \sum _{k=1}^\infty  p(k,x)\gP (\tau _n=k).
\end{align*} 
The Green function $G_{\alpha}(x,y)$ and the Green potential $G_{\alpha}f(x)$,
$f \geq 0$, are defined as
\begin{align*}
G_{\alpha}(x,y) = \sum_{n\geq 0} p_{\alpha}(n,x-y)
\end{align*}
and
\begin{align*}
G_{\alpha}f(x) =\sum_{y\in Z^d} G_{\alpha}(x,y) f(y).
\end{align*}
According to \cite[Theorem 2.3]{Cygan}, $G_{\alpha}(x,y)< \infty $ for all $x,y \in
\gZ ^d$, i.e. $S_{\alpha}$ is transient, if and only if $0<\alpha<d$. In
particular, when $d\geq 2$, $S_{\alpha}$ is transient for all $0<\alpha<2$.
\par Assume that $S_{\alpha}$ is transient. The capacity $\mathrm{Cap}_{\alpha}(B)$ is
defined as
\begin{align*}
\mathrm{Cap}_{\alpha}(B)= \sum_{b\in B} \phi_{B}(b),
\end{align*}
where $\phi_{B}$ is the equilibrium distribution on $B$, that is, $\phi_{B}\geq
0$, $\mathrm{supp}(\phi_{B})\subseteq B$ and the potential $G_{\alpha}\phi_{B}$ is less or
equal $1$ everywhere and equals $1$ on $B$. We refer to \cite[Chapter VI]{spitzer} 
and \cite[Chapter I]{dynkin} for the general treatment of
capacities defined by transient random walks.
\par By \cite[Chapters IX and X]{meyer}, the cone of potentials $\{G_{\alpha}f\}$ defined
by the discrete time Markov semigroup $\{P_{\alpha}^n\}_{n\in \gN}$ coincides
with the cone of potentials defined by the continuous time Markov
semigroup $P_{\alpha}^t=\exp[-t(I-P_{\alpha})], t>0$. Indeed, for any
$\phi\geq 0$, we have
\begin{align*}
G_{\alpha}\phi(x)=\int_0^{\infty} P_{\alpha}^t\phi(x) dt, 
\end{align*}
for all $x\in Z^d$. In particular, $\mathrm{Cap}_{\alpha}(B)$ coincides with the capacity
defined by the continuous time Markov semigroup, see \cite[Chapter VI]{Blumen}.
\par The semigroup $\{P_{\alpha}^t\}_{t>0}$ acts in $L^2=L^2(\mathbb{Z}^d,\nu)$, $\nu $ is the
counting measure, and it is symmetric and Markovian. Its $L^2$--generator is
$I-P_{\alpha}$. According to \cite[Chapter 2]{Fuku}, $\mathrm{Cap}_{\alpha}(B)$
coincides with $L^2$--capacity defined by the Dirichlet form
$\mathcal{E}_{\alpha}(f,f)=\int (f-P_{\alpha}f)f d\nu$. Thus finally, the capacity
$\mathrm{Cap}_{\alpha}(B)$ defined originally by the random walk $S_{\alpha}$ coincides
with the capacity defined by the Dirichlet form $\mathcal{E}_{\alpha}(f,f)$. We use this
correspondence later in Section 4 to prove certain lower bounds of
capacities defined by subordinated random walks.
\par Assume that $S_{\alpha}$ is transient. Let $B$ be a proper subset of $\gZ ^d$. Let
$p_{B}(x)$ be the hitting probability of $B$. The set $B$ is called massive
(recurrent) if $p_{B}(x)=1$ for all $x\in \gZ ^d$ and non-massive otherwise. One
of the main ingredients in our study is the following test of
massiveness, see \cite[Theorem 3.1]{Cygan}.
\begin{testmassive}
A subset $B\subset \gZ ^d$ is $S_\alpha $-massive if and only if  
\begin{align*}
\sum _{n\geq 1}\frac{\kap(B_n)}{2^{n(d-\alpha )}}=\infty ,
\end{align*}
where $B_n=\{b\in B:\, 2^n\leq \Vert b\Vert <2^{n+1}\} $.
\end{testmassive}
\par In the paper \cite{Cygan} we concentrate ourselves on the case $d\geq 3$ and $0<\alpha < 2$. In this setting any cone is a massive set. Hence the problem becomes non-trivial when we consider thin sets such as thorns. Let $t(n)$ be a non-decreasing sequence of positive numbers such that $t(n)=o(n)$ at infinity. We define the thorn $\mathcal{T}$ as 
\begin{align*}
\mathcal{T}=\{(x_1,\ldots ,x_d)\in \gZ ^d: \Vert (x_1,\ldots ,x_{d-1})\Vert \leq t(x_d),\, x_d\geq 1  \},
\end{align*} 
where $\Vert \cdot \Vert $ denotes the Euclidean norm.
Let $S$ be the simple random walk. When the dimension $d$ is $3$ the set
\begin{align*}
\{x=(x_1,x_2,x_3)\in \gZ ^3:\, x_1=x_2=0\}
\end{align*}
is massive with respect to $S$. In particular any thorn is massive in $\gZ ^3$. Thus $S$-massiveness of the thorns becomes non-trivial when the dimension $d\geq 4$. This problem has been completely solved by It\^{o} and McKean in the celebrated paper \cite{ItoMcKean}. 
\begin{thm}\cite[Section 6]{ItoMcKean}
Let $S$ be the symmetric simple random walk. Assume that $d\geq 4$.
Then  $\mathcal{T}$ is $S$-massive if and only if the following condition holds 
\begin{align*}
\sum _{n>0}\Big(\frac{t(2^n)}{2^n}\Big)^{d-3}=\infty .
\end{align*}
\end{thm}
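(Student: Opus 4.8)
The plan is to deduce the criterion from the capacity test of massiveness applied to the simple random walk $S$, which is the formal endpoint case $\alpha=2$ of the subordinated family: $\cT$ is $S$-massive if and only if
\[
\sum_{n\geq 1}\frac{\mathrm{Cap}_{2}(\cT_n)}{2^{n(d-2)}}=\infty,
\]
where $\cT_n=\{b\in\cT:\,2^n\leq\|b\|<2^{n+1}\}$ and $\mathrm{Cap}_{2}$ is the Newtonian capacity associated with $S$. Everything then reduces to the single two-sided capacity estimate
\[
\mathrm{Cap}_{2}(\cT_n)\asymp 2^n\,t(2^n)^{d-3},
\]
since substituting this into the test gives $\sum_n 2^n t(2^n)^{d-3}/2^{n(d-2)}=\sum_n\big(t(2^n)/2^n\big)^{d-3}$, which is precisely the asserted divergence condition. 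Thus the heart of the matter is estimating the capacity of a single dyadic slice of the thorn.

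First I would describe the geometry of $\cT_n$. Because $t(x_d)=o(x_d)$, the last coordinate dominates the norm of a point $b\in\cT$, so $\|b\|\asymp x_d$; hence on $\cT_n$ the coordinate $x_d$ ranges over a block of length $\asymp 2^n$, while the transverse ball $\{\|(x_1,\dots,x_{d-1})\|\leq t(x_d)\}$ has radius $\asymp t(2^n)$. Up to constants, $\cT_n$ is therefore a discrete cylinder of length $L=2^n$ and cross-sectional radius $r=t(2^n)$, containing $N\asymp L\,r^{d-1}$ lattice points. For the upper bound I would cover $\cT_n$ by $\asymp L/r$ boxes of side $\asymp r$; each such box has capacity $\asymp r^{d-2}$, and subadditivity of capacity gives $\mathrm{Cap}_{2}(\cT_n)\lesssim (L/r)\,r^{d-2}=L\,r^{d-3}$.

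For the lower bound I would use the variational (energy) description of capacity coming from the Dirichlet form $\cE_\alpha$ flagged in the Introduction, in the form $\mathrm{Cap}_{2}(B)\geq \mu(B)^2/\sum_{x,y}G_{2}(x,y)\mu(x)\mu(y)$ for any nonnegative measure $\mu$ supported on $B$. Taking $\mu$ to be counting measure on $\cT_n$ and using the classical Green estimate $G_{2}(x,y)\asymp\|x-y\|^{2-d}$, I would estimate the energy by splitting the inner sum $\sum_{y}\|x-y\|^{2-d}$ at scale $r$: the near part $\|x-y\|\lesssim r$, where $\cT_n$ looks $d$-dimensional, contributes $\asymp\sum_{\rho\leq r}\rho^{d-1}\rho^{2-d}\asymp r^2$, while the far part $\|x-y\|\gtrsim r$, where $\cT_n$ looks like a one-dimensional tube of cross-section $\asymp r^{d-1}$, contributes $\asymp r^{d-1}\sum_{\rho=r}^{L}\rho^{2-d}\asymp r^2$. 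Hence $\sum_{y}G_{2}(x,y)\asymp r^2$ uniformly in $x$, the total energy is $\asymp N r^2$, and so $\mathrm{Cap}_{2}(\cT_n)\gtrsim N^2/(N r^2)=N/r^2\asymp L\,r^{d-3}$, matching the upper bound.

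I expect the lower bound, specifically the energy estimate, to be the main obstacle, and it is there that the hypothesis $d\geq 4$ becomes transparent: the far-field sum $\sum_{\rho=r}^{L}\rho^{2-d}$ is dominated by its lower endpoint and equals $\asymp r^{3-d}$ exactly when $2-d<-1$, i.e. $d\geq 4$, which is what yields the clean $r^2$ energy per point; for $d=3$ this sum is instead $\asymp\log(L/r)$, reflecting why the thorn criterion is trivial below dimension four. A secondary technical point to handle carefully is the dyadic comparability of $t$ on $[2^n,2^{n+1})$, where the monotonicity of $t$ together with the growth bound $t(n)=o(n)$ is used to replace $t(x_d)$ by $t(2^n)$ at the cost of only constant factors.
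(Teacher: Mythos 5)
The paper contains no proof of this statement: it is quoted from It\^{o}--McKean \cite{ItoMcKean} purely as background, so there is no internal argument to compare against. Your proposal is, in its essentials, a correct reconstruction, and it follows exactly the methodology that this paper and its companion \cite{Cygan} use for the $\alpha$-stable analogue (Theorem \ref{thm2}): a Wiener-type test over dyadic slices combined with two-sided capacity estimates for each slice. Your lower bound via the energy of counting measure is precisely the paper's inequality \eqref{eq123} specialized to the uniform measure (compare the proof of Proposition \ref{prop12}), and your two-sided estimate $\mathrm{Cap}(\mathcal{T}_n)\asymp 2^n\,t(2^n)^{d-3}$ is correct, including the observation that $d\geq 4$ is exactly what makes the far-field Green sum endpoint-dominated. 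It is also worth noting that the direct energy computation is genuinely necessary here: the cardinality-type bound of Section 4, which at $\alpha=2$ would read $\mathrm{Cap}(B)\gtrsim |B|^{1-2/d}$, gives only $\big(2^n t(2^n)^{d-1}\big)^{1-2/d}$, strictly weaker than $2^n t(2^n)^{d-3}$ for thin thorns, so the paper's Nash-inequality route cannot replace your slice estimate.

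Two caveats, neither fatal. First, the Test of massiveness stated in the paper is established in \cite{Cygan} only for the subordinated walks with $0<\alpha<2$; you cannot literally ``set $\alpha=2$'' in it. You must instead invoke the classical Wiener test for the simple random walk (which is It\^{o}--McKean's own starting point, and appears in \cite{spitzer} and \cite{dynkin}); it has exactly the form you use, so this is a citation repair rather than a mathematical gap. Second, your claim that monotonicity of $t$ together with $t(n)=o(n)$ lets you replace $t(x_d)$ by $t(2^n)$ on a slice ``at the cost of only constant factors'' is false in general: a nondecreasing sublinear $t$ need not be dyadically doubling, and the ratio $t(2^{n+1})/t(2^n)$ can be unbounded. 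Fortunately your argument does not need doubling: for the divergence direction bound $\mathrm{Cap}(\mathcal{T}_n)$ from below by the inscribed cylinder of radius $t(2^n)$ (monotonicity of capacity), and for the convergence direction bound it from above by the circumscribed cylinder of radius $t(2^{n+1})$; the resulting comparison series is merely an index shift of $\sum_n\big(t(2^n)/2^n\big)^{d-3}$, so both implications survive without any regularity assumption on $t$ beyond monotonicity.
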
  
The main result of the paper \cite{Cygan} is the following statement. 
\begin{thm}\label{thm2}\cite[Theorem 4.4]{Cygan}
Let $S_\alpha $ be the $\alpha$-stable random walk. Assume that $d\geq 3$.
Then  $\mathcal{T}$ is $S_\alpha$-massive if and only if the following condition holds 
\begin{align*}
\sum _{n>0}\Big(\frac{t(2^n)}{2^n}\Big)^{d-\alpha -1}=\infty .
\end{align*}
\end{thm}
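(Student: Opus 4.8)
The plan is to feed a sharp two-sided estimate for the capacities of the dyadic slices of $\cT$ into the Test of Massiveness. Set $r_n:=t(2^n)$. Since $t(n)=o(n)$, on $\cT$ one has $\|x\|\asymp x_d$, so the slice $\cT_n=\{b\in\cT:\,2^n\le\|b\|<2^{n+1}\}$ is, up to bounded multiplicative factors, the lattice cylinder whose axial coordinate $x_d$ ranges over an interval of length $\asymp 2^n$ and whose transverse coordinate $(x_1,\dots,x_{d-1})$ fills the $(d-1)$-dimensional ball of radius $\asymp r_n$; in particular $\cT_n$ carries $N_n\asymp 2^n r_n^{\,d-1}$ lattice points. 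The crux is the estimate
\[
\mathrm{Cap}_\alpha(\cT_n)\asymp 2^n\, r_n^{\,d-\alpha-1}.
\]
Granting it, the divergence criterion $\sum_n 2^{-n(d-\alpha)}\,\mathrm{Cap}_\alpha(\cT_n)=\infty$ of the Test of Massiveness becomes exactly $\sum_n (r_n/2^n)^{d-\alpha-1}=\infty$, which is the asserted condition. Throughout I would use two standard facts about the transient walk $S_\alpha$ ($0<\alpha<d$): the discrete Riesz bound $G_\alpha(x,y)\asymp\|x-y\|^{\alpha-d}$ and its consequence $\mathrm{Cap}_\alpha(B(0,\rho))\asymp\rho^{d-\alpha}$ for lattice balls.

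For the upper bound I would cover $\cT_n$ by $\asymp 2^n/r_n$ lattice boxes of side $\asymp r_n$ and invoke subadditivity of capacity together with the ball estimate:
\[
\mathrm{Cap}_\alpha(\cT_n)\le\sum_{\text{boxes}}\mathrm{Cap}_\alpha(\text{box})\lesssim \frac{2^n}{r_n}\,r_n^{\,d-\alpha}=2^n r_n^{\,d-\alpha-1}.
\]
For the matching lower bound I would use the energy characterisation of capacity, legitimate here because $\mathrm{Cap}_\alpha$ coincides with the $L^2$-capacity of the Dirichlet form $\cE_\alpha$: for any measure $\mu$ supported in $\cT_n$ one has $\mathrm{Cap}_\alpha(\cT_n)\ge \mu(\cT_n)^2\big/\sum_{x,y}G_\alpha(x,y)\mu(x)\mu(y)$. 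Taking $\mu$ to be the counting measure on $\cT_n$ and estimating the energy with the Riesz bound --- summing $\|x-y\|^{\alpha-d}$ first over the axial variable (which converges precisely because $d-\alpha>1$) and then over the transverse ball --- I expect
\[
\sum_{y\in\cT_n}G_\alpha(x,y)\ \asymp\ \sum_{|w|\le r_n}|w|^{\,1+\alpha-d}\ \asymp\ r_n^{\alpha}\qquad(x\in\cT_n),
\]
whence the total energy is $\asymp N_n r_n^{\alpha}$ and $\mathrm{Cap}_\alpha(\cT_n)\gtrsim N_n^2/(N_n r_n^{\alpha})=N_n/r_n^{\alpha}=2^n r_n^{\,d-\alpha-1}$, matching the upper bound.

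The routine half is the upper bound; the main obstacle is the lower bound, and precisely the energy estimate in the last display. Two points need care. First, one needs the Riesz-kernel bound for $G_\alpha$ to hold uniformly over all scales $1\le\|x-y\|\lesssim 2^n$ occurring inside $\cT_n$, not merely in the asymptotic regime, so that the iterated axial/transverse summation is honest. Second, both the convergence of the axial sum and the positivity of the exponent $d-\alpha-1$ in the final estimate hinge on $d-\alpha>1$, i.e.\ on the hypothesis $d\ge3$ combined with $\alpha<2$; this is exactly where the dimension restriction is indispensable and where the analysis must be redone for $d=1,2$. Finally I would check that replacing $t(x_d)$ by $t(2^n)$ across a dyadic block (permitted since $t(2^n)\le t(x_d)\le t(2^{n+1})$ by monotonicity) does not disturb the comparison, which is harmless thanks to the geometric weights $2^{-n(d-\alpha)}$ in the Test of Massiveness.
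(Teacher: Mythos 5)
Your proof is correct and takes essentially the same route as the source: the present paper does not reprove this theorem (it is quoted from \cite{Cygan}), and your scheme --- dyadic slices fed into the Test of Massiveness, with the slice capacity bounded above by covering with boxes of side $t(2^n)$ and below by the energy of the counting measure under a uniform Riesz bound $G_\alpha(x,y)\asymp \Vert x-y\Vert ^{\alpha -d}$ --- is precisely the methodology this paper itself applies to its $\gZ ^2$ analogues (Proposition \ref{prop12} and the thorn theorem of Section 3, both resting on the lemma containing (\ref{eq123}), whose lower inequality is your counting-measure energy bound). The one point needing genuine care is the one you flag at the end: since no doubling is assumed on $t$, one only gets the sandwich $2^n\, t(2^n)^{d-\alpha -1}\lesssim \kap (\cT _n)\lesssim 2^n\, t(2^{n+1})^{d-\alpha -1}$ rather than a two-sided estimate in terms of $t(2^n)$ alone, but this suffices for both implications because $\sum _n \big(t(2^{n+1})/2^{n+1}\big)^{d-\alpha -1}$ and $\sum _n \big(t(2^n)/2^n\big)^{d-\alpha -1}$ diverge simultaneously (an index shift), not because of the geometric weights.
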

Thus Theorem \ref{thm2} can be seen as an extension of the theorem of It\^{o} and McKean.
In this paper we study $S_\alpha$-massive sets assuming that $d\leq 2$ and $0<\alpha <d$. In this case $S_\alpha $ is transient. 
\par In Section 2 we consider subordinated random walks in $\gZ $ and study massive sequences $\{b_n\}\subset \gZ $, e.g. the set $\mathcal{P}$ of primes and some of its subsets such as Piatetski-Shapiro primes etc. The results obtained in this section are in spirit of the papers of McKean \cite{McKean} and Bucy \cite{bucy}. 
\par In Section 3 we consider $\alpha$-stable random walks in $\gZ ^2$. When $0<\alpha <1$ the set $A=\{x=(x_1,x_2)\in \gZ ^2:\, x_2=0\}$ is not massive, hence we study massive thorns. When $1\leq \alpha <2$ the set $A$ is massive, hence we study massive subsequences of $A$. In both cases we also study massive subthorns, i.e. the sets of the form
\begin{align*}
\mathcal{V}=\mathcal{T}\cap \{(x_1,x_2)\in \mathbb{Z} ^2:\, x_2\in \mathcal{A}\},
\end{align*}
where $\mathcal{A}$ is a given sequence of integers.
\par One of the main ingredient in our proofs is a very precise lower bound of the $S_\alpha$-capacity. A proof of this bound we provide in the concluding Section 4.\\
\textbf{Notation.}
For any two non-negative functions $f$ and $g$, $f(r)\sim g(r)$ at $a$ means that $\lim _{r\rightarrow a}f(r)/g(r)=1$; $f(x)=O(g(x))$ if $f(x)\leq Cg(x)$, for some constant $C>0$, and $f(x)\asymp g(x)$ if $f(x)=O(g(x))$ and $g(x)=O(f(x))$. We also write $f(r)=o(g(r))$ at $a$ if $\lim _{r\rightarrow a}f(r)/g(r)=0$. 
\section{Massive subsets of $\gZ $}
\par Let $S_\alpha$, $0<\alpha <1$, be the subordinated random walk in $\mathbb{Z}$ as defined above. $S_\alpha $ is transient whence any finite subset of $\gZ$ is not massive whereas the whole of $\gZ$ is evidently massive. We study here proper infinite subsets of $\gZ$, for instance the set $\cP$ of primes.
\par 
One of the main ingredient in our proofs is the asymptotic of the Green function $G_\alpha (x,y)$ obtained in the paper \cite[Theorem 2.4]{Cygan},
\begin{align}
G_\alpha (x,y)\sim  \frac{\, \, 2^{-\alpha /2}\pi ^{-1/2}}{\Gamma \big( \frac{\alpha}{2}\big) }\, \Gamma \Big( \frac{1-\alpha}{2}\Big)\vert x-y\vert ^{\alpha -1},\quad \textrm{as}\ \ \vert x-y\vert \to \infty .\label{Green1}
\end{align}
The second important ingredient is the following lower bound of the capacity $\kap (B)$,
\begin{align}
\kap (B)\geq c|B|^{1-\alpha},\label{isoperim}
\end{align}
where $|B|$ is the cardinality of the set $B$ and $c=c(\alpha ) >0$ is a constant. The proof of the inequality (\ref{isoperim}) will be provided in the concluding Section 4, Corollary \ref{CorPsi}.
\par An increasing sequence $\mathcal{A}=\{ a_n : n\in \gN\}$ is called \textit{superlinear} if it satisfies 
\begin{align}
a_n\geq a_{n-k}+a_k,\quad \textrm{for all}\ \ 0<k<n .\label{superlin}
\end{align}
Examples of superlinear sequences $\{a_n\}$ are: $[n^\beta]$, $\beta \geq 1$; $[n\log n]$. The proof of the next statement is similar to that of \textsc{Bucy} \cite[Corollary 4.1]{bucy} but some details and the applications we have in mind require adjustments and variations.
\begin{thm}\label{thmSuperlin}
Let $\mathcal{A}=\{ a_n : n\in \gN\}$ be superlinear. Then the set $\mathcal{A}$ is massive if and only if 
\begin{align}
\sum _{n\geq 1}\frac{1}{a_n^{1-\alpha}}=\infty .\label{ser1}
\end{align}
\end{thm}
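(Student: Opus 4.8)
The plan is to run everything through the Test of massiveness after transferring to the dyadic blocks $\cA_m=\cA\cap[2^m,2^{m+1})$, with $L_m=|\cA_m|$. Since $\cA$ is increasing, only finitely many $a_n$ are nonpositive and may be discarded; for $a_n\in[2^m,2^{m+1})$ one has $a_n^{-(1-\alpha)}\asymp 2^{-m(1-\alpha)}$, so grouping the sum (\ref{ser1}) by blocks gives
\begin{align*}
\sum_{n\geq 1}\frac{1}{a_n^{1-\alpha}}\asymp\sum_{m\geq 1}\frac{L_m}{2^{m(1-\alpha)}}.
\end{align*}
By the Test of massiveness with $d=1$, $\cA$ is massive iff $\sum_m \kap(\cA_m)\,2^{-m(1-\alpha)}=\infty$. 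Hence the theorem reduces to the single equivalence $\sum_m \kap(\cA_m)2^{-m(1-\alpha)}=\infty\Longleftrightarrow\sum_m L_m2^{-m(1-\alpha)}=\infty$.

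For the easy half, (\ref{ser1}) convergent $\Rightarrow$ $\cA$ non-massive, I would use only subadditivity and monotonicity of capacity. As $\kap(\{0\})=1/G_\alpha(0,0)$ is a finite constant, $\kap(\cA_m)\leq L_m\,\kap(\{0\})=CL_m$, whence the test series is $\leq C\sum_n a_n^{-(1-\alpha)}<\infty$. This direction uses neither (\ref{isoperim}) nor the superlinearity (\ref{superlin}).

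The substance is the converse, for which I need a matching lower bound on $\kap(\cA_m)$. Using the energy characterisation of capacity (legitimate here by the identifications of capacity with the Dirichlet-form capacity recorded in the introduction), for the uniform probability measure $\mu_m$ on $\cA_m$ one has
\begin{align*}
\kap(\cA_m)\geq\Big(\sum_{b,b'\in\cA_m}G_\alpha(b,b')\,\mu_m(b)\mu_m(b')\Big)^{-1}=\frac{L_m^2}{\sum_{b,b'\in\cA_m}G_\alpha(b,b')}.
\end{align*}
Writing $\cA_m=\{c_0<\dots<c_{L_m-1}\}$, the asymptotic (\ref{Green1}) gives $G_\alpha(c_i,c_j)\leq C|c_i-c_j|^{\alpha-1}$, and (\ref{superlin}) yields the spacing bound $|c_i-c_j|\geq a_{|i-j|}$. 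The target is the two-sided estimate $\kap(\cA_m)\asymp\min(L_m,2^{m(1-\alpha)})$; granting it, the test series is $\asymp\sum_m\min(x_m,1)$ with $x_m=L_m2^{-m(1-\alpha)}$, and $\sum_m\min(x_m,1)=\infty$ precisely when $\sum_m x_m=\infty$, which closes the equivalence.

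The hard part will be this lower bound, and it is here that the \emph{global} superadditive structure, rather than (\ref{isoperim}) alone, must be used. The difficulty has two faces. First, a run of $s$ consecutive integers inside a block forces, via (\ref{superlin}) and the consequent $a_n\geq na_1$, the equalities $a_p=p$ for $p<s$; thus long clusters are incompatible with sparsity and occur only when $\cA$ is essentially all of $\gZ^{+}$. This is the mechanism preventing a block from having large cardinality yet small capacity: a dense block is forced to spread, so $\kap(\cA_m)$ saturates at the interval value $\kap([2^m,2^{m+1}))\asymp 2^{m(1-\alpha)}$, while a sparse block has sufficiently separated points and $\kap(\cA_m)\asymp L_m$. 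Second, at the critical density $a_n\asymp n^{1/(1-\alpha)}$ the naive bound $|c_i-c_j|\geq a_{|i-j|}$ inserted into the double sum yields $\sum_{k<L_m}(L_m-k)a_k^{\alpha-1}\asymp L_m\log L_m$, a spurious logarithm; the true local spacing near $2^m$ is $\asymp 2^{m\alpha}$, and one must combine the confinement of $\cA_m$ to the dyadic annulus with (\ref{superlin}) to see that each row of the Green matrix contributes only $O(1)$. Carrying out this energy estimate — equivalently, invoking the sharp capacity lower bound of Section 4, in the spirit of \textsc{Bucy} \cite{bucy} — is the crux; the bookkeeping above is routine.
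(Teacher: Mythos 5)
Your block decomposition and your convergence half are fine: subadditivity and translation invariance give $\kap(\cA_m)\le L_m/G_\alpha(0,0)$, and the test of massiveness then turns convergence of (\ref{ser1}) into non-massiveness (the paper gets this half even more directly, by Borel--Cantelli applied to $\sum_n G_\alpha(0,a_n)$ via (\ref{Green1}), with no capacities at all). The genuine gap is the divergence half: your whole argument hangs on the estimate $\kap(\cA_m)\gtrsim\min\bigl(L_m,2^{m(1-\alpha)}\bigr)$, which you never prove and explicitly defer as ``the crux''. Nothing you point to supplies it. The ``sharp capacity lower bound of Section 4'' is exactly (\ref{isoperim}), $\kap(B)\ge c|B|^{1-\alpha}$, and it is demonstrably insufficient here: at the critical density $a_n\asymp n^{1/(1-\alpha)}$ one has $L_m\asymp 2^{m(1-\alpha)}$, so (\ref{isoperim}) only yields a test series $\asymp\sum_m 2^{-m\alpha(1-\alpha)}<\infty$, although (\ref{ser1}) diverges. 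Worse, the two-sided bound you assert cannot hold with constants independent of the sequence: for the superlinear set $\cA=\bigcup_{k\ge 0}\{kT+1,\dots,kT+s\}$ (superlinear whenever $T\ge s$), at blocks with $2^{m\alpha}\asymp T/s$ the lemma (\ref{eq123}) together with two-sided bounds on $G_\alpha$ gives $\kap(\cA_m)\asymp 2^{m(1-\alpha)}s^{-\alpha}$, while $\min(L_m,2^{m(1-\alpha)})=2^{m(1-\alpha)}$; in particular ``each row of the Green matrix contributes $O(1)$'' is false there (rows contribute $\asymp s^{\alpha}$). So any correct version needs sequence-dependent constants and the global forcing structure (a cluster of $s$ points of diameter $D$ forces $a_{s-1}\le D$) that you mention but never develop --- and that development is essentially the entire theorem.

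Two further remarks. First, the paper's proof of the hard half is organized completely differently and needs no per-block capacity bound: assuming (\ref{ser1}) diverges while $\cA$ is non-massive, it takes the equilibrium measure $\phi$ of $\cA$, which satisfies (\ref{Hopf}), forms the weighted averages $F_N(m)$, proves (\ref{eqFN}) and the pointwise limit (\ref{LIMFN}), and then uses superlinearity only through the elementary inequalities (\ref{seqone})--(\ref{seqtwo}) to get the uniform domination $F_N(m)\le c\,a_m^{\alpha-1}$; dominated convergence then contradicts (\ref{eqFN}). Second, your framework can in fact be completed, but by the very bound you discarded, not the one you assert. From (\ref{eq123}), the estimate $G_\alpha(b,b')\le C|b-b'|^{\alpha-1}$ and the spacing $|c_i-c_j|\ge a_{|i-j|}$ one gets $\kap(\cA_m)\ge L_m/\bigl(C(1+\Sigma_{L_m})\bigr)$, where $\Sigma_L=\sum_{k\le L}a_k^{\alpha-1}$. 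Since $\Sigma_{L_m}\le 1+S_m$ with $S_m=\sum_{j\le m}x_j$, the test series dominates $c\sum_m x_m/(1+S_m)$, and for any nonnegative divergent series this diverges (Abel--Dini; directly, $\sum_{m=N}^{M}x_m/(1+S_m)\ge (S_M-S_{N-1})/(1+S_M)\to 1$ as $M\to\infty$ for each fixed $N$). The ``spurious logarithm'' you feared is exactly what Abel--Dini absorbs, so no sharp per-block bound is ever needed. As written, however, your proposal leaves the substantial half of Theorem \ref{thmSuperlin} unproven.
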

\begin{proof}
Suppose that $\sum _{n\geq 1}a_n^{\alpha -1}<\infty$. Then by the equation (\ref{Green1}), 
\begin{align*}
\sum _{n\geq 1}G_\alpha (0,a_n)<\infty .
\end{align*}
Recall that $G_\alpha (0,a_n)$ is the expected number of visits to $a_n$ of $S_\alpha $ started at $0$. Applying the Borel-Cantelli lemma we obtain non-massiveness of $\mathcal{A}$.
\par Suppose that $\sum _{n\geq 1}a_n^{\alpha -1}=\infty$ and $\mathcal{A}$ is not massive. Let $\phi$ be the equilibrium measure of the set $\mathcal{A}$. For each $n\geq 1$  we have
\begin{align}
\sum _{m\geq 1}G_\alpha (a_m,a_n)\phi (a_m)=1.\label{Hopf}
\end{align} 
Let $F_N(m)$ be defined as
\begin{align*}
F_N(m)=\frac{\sum _{n=1}^Na_n^{\alpha -1 }G_\alpha (a_m,a_n)}{\sum _{n=1}^{N}a_n^{\alpha -1}}.
\end{align*}
We claim that
\begin{align}
\sum _{m=1}^\infty F_N(m)\phi (a_m)=1\label{eqFN}
\end{align}
and
\begin{align}
\lim _{N\rightarrow \infty} F_N(m)=0.\label{LIMFN}
\end{align}
The equation (\ref{eqFN}) follows from the very definition. Since $a_n\rightarrow \infty $, by the Green-function asymptotic (\ref{Green1}), for any fixed $m\geq 1$ and for $N$ large enough there are some constants $c_1,c_2, c_3>0$ which depend only on $\alpha$ such that
\begin{align*}
F_N(m)&\leq \frac{c_1+c_2\sum _{n=m+1}^{N}a_n^{\alpha -1}(a_n-a_m)^{\alpha -1}}{\sum _{n=1}^{N}a_n^{\alpha -1} } \\
&\leq \frac{c_1}{\sum _{n=1}^{N}a_n^{\alpha -1}}+\frac{c_3\sum _{n=m+1}^{N}a_n^{2(\alpha -1)}}{\sum _{n=1}^{N}a_n^{\alpha -1} }.
\end{align*}
The first term tends to 0 by the assumption. Hence we are left to show that the second term tends to zero as well.
Let $\epsilon >0 $ be fixed. Since $a_n^{\alpha -1}\rightarrow 0$ we can find $M=M(\epsilon)$ such that $a_n^{\alpha -1}<\epsilon$ for all $n\geq M$. For any $N>M$ we have
\begin{align*}
\frac{\sum _{n=1}^{N}a_n^{2(\alpha -1)}}{\sum _{n=1}^{N}a_n^{\alpha -1} }&\leq \frac{\sum _{n=1}^{M}a_n^{2(\alpha -1)}+\epsilon \sum _{n=M+1}^{N}a_n^{\alpha -1}}{\sum _{n=1}^{N}a_n^{\alpha -1} }\\
&\leq \frac{C(M)}{\sum _{n=1}^{N}a_n^{\alpha -1}}+\epsilon
\end{align*}
where $C(M)>0$ depends only on $M$. This evidently proves (\ref{LIMFN}).
\par Now we use superlinearity of the sequence $\mathcal{A}$ and show that
\begin{align}
\lim _{N\rightarrow \infty}\sum _{m=1}^\infty F_N(m)\phi (a_m)=0.\label{eq34}
\end{align}
This will contradict (\ref{eqFN}) and the proof will be finished. \\
\textit{Claim.} For some $c=c(\alpha )>0$ and any $m$, $N$,
\begin{align}
F_N(m)\leq c(\alpha )a_m^{\alpha -1}.\label{FN1}
\end{align}
\par To prove (\ref{FN1}) we need the following two inequalities: 
\begin{align}
a_n^{\alpha -1}(a_m-a_n)^{\alpha -1}&\leq a_m^{\alpha -1}\big( a_n^{\alpha -1}+a_{m-n}^{\alpha -1} \big),\quad m>n,\label{seqone}\\
a_n^{\alpha -1}(a_n-a_m)^{\alpha -1}&\leq a_m^{\alpha -1}\big( a_n^{\alpha -1}+a_{n-m}^{\alpha -1} \big),\quad n>m\label{seqtwo}.
\end{align}
Since (\ref{seqone}) and (\ref{seqtwo}) can be proved similarly we prove (\ref{seqone}). By the assumption  $a_{m-n}\leq a_m-a_n$ we have
\begin{align*}
a_ma_n^{-1}&\leq a_ma_{m-n}^{-1}-a_na_{m-n}^{-1}+a_ma_n^{-1}-1\\
&=(a_m-a_n)( a_n^{-1}+a_{m-n}^{-1} ).
\end{align*}
It follows that
\begin{align*}
a_n^{-1}(a_m-a_n)^{-1}&\leq a_m^{-1}(a_n^{-1}+a_{m-n}^{-1}).
\end{align*}
Taking both sides to the power $1-\alpha $ and applying the inequality
\begin{align*}
(x+y)^{1-\alpha} <x^{1-\alpha} +y^{1-\alpha},\quad x,y>0
\end{align*}
we get (\ref{seqone}).
\par When  $m<N$ we have, for some  $c_1, c_2>0$ depending only on $\alpha $,
\begin{align}\label{fN}
F_N(m)\leq \frac{c_1\sum _{n=1, n\neq m}^Na_n^{\alpha -1}|a_m-a_n|^{\alpha -1}+c_2a_m^{\alpha -1} }{\sum _{n=1}^{N}a_n^{\alpha -1}}.
\end{align}
Substituting (\ref{seqone}) and (\ref{seqtwo}) in (\ref{fN}) 
we obtain (\ref{FN1}). For $m\geq N$ we use (\ref{seqone}):
\begin{align*}
F_N(m)\leq c_3\frac{a_m^{\alpha -1} \sum _{n=1}^N (a_n^{\alpha -1}+a_{m-n}^{\alpha -1} )}{\sum _{n=1}^{N}a_n^{\alpha -1}}
= c_3\, a_m^{\alpha -1} ( 1+ \Delta),
\end{align*}
for some $c_3=c_3(\alpha )>0$.
Since $a_{m-n}^{\alpha -1}\leq a_{N-n}^{\alpha -1}$ for $n<N\leq m$, we get
\begin{align*}
\Delta \leq \frac{\sum _{n=1}^{N-1}a_{N-n}^{\alpha -1}}{\sum _{n=1}^{N}a_n^{\alpha -1}}+\frac{a^{\alpha -1}_{m-N}}{\sum _{n=1}^{N}a_n^{\alpha -1}}
\leq 1+\frac{1}{\sum _{n=1}^{N}a_n^{\alpha -1}}\leq c.
\end{align*}
Applying now the claim we obtain that
\begin{align*}
F_N(m)\phi (a_m)\leq ca_m^{\alpha -1}\phi (a_m).
\end{align*}
The equations (\ref{LIMFN}), (\ref{Hopf}) and the dominated convergence theorem yield (\ref{eq34}).
\end{proof}
\begin{corollary}\label{convex}
Let $\{a_n\}$ be an increasing sequence of positive integers such that the sequence
$\Delta a_n=a_n-a_{n-1}$
is non-decreasing. Then $\{a_n\}$ is massive if and only if 
\begin{align*}
\sum _{n=1}^{\infty} a_n^{\alpha -1}=\infty .
\end{align*}
\end{corollary}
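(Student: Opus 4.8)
The plan is to reduce the statement to Theorem~\ref{thmSuperlin} by passing to a translate of $\mathcal{A}$. The point to watch is that a sequence with non-decreasing increments need not itself be superlinear: for example $a_n=n^2+100$ has $\Delta a_n=2n-1$ non-decreasing, yet the superlinearity inequality $a_n\geq a_{n-k}+a_k$ fails for small indices because of the additive constant. Since both massiveness and the divergence of $\sum_n a_n^{\alpha-1}$ are insensitive to a bounded shift, this obstruction is only apparent, and the whole task is to organize the reduction cleanly.

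First I would extend the sequence by declaring $a_0:=2a_1-a_2$, which makes the first increment $a_1-a_0$ equal to $a_2-a_1$ and hence keeps $\Delta a_n$ non-decreasing for all $n\geq 1$; then I set $b_n:=a_n-a_0$, an increasing sequence of positive integers with $b_0=0$ and $\Delta b_n=\Delta a_n$. The key step is to check that $\{b_n\}$ is superlinear. Writing $b_m-b_n=\sum_{i=n+1}^{m}\Delta a_i$ and $b_{m-n}=\sum_{i=1}^{m-n}\Delta a_i$ and comparing the two sums term by term, the monotonicity $\Delta a_{n+i}\geq\Delta a_i$ gives $b_m-b_n\geq b_{m-n}$, that is $b_m\geq b_{m-n}+b_n$, which is exactly \eqref{superlin}. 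This discrete superadditivity of a convex sequence vanishing at the origin is the heart of the argument; I expect it, together with the realization that one must translate before invoking superlinearity, to be the main (though short) obstacle.

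Finally I would transfer the conclusion back to $\mathcal{A}$. Because $\{b_n\}$ is the translate $\mathcal{A}-a_0$ and $S_\alpha$ is spatially homogeneous, the hitting probabilities satisfy $p_{\mathcal{A}-a_0}(x-a_0)=p_{\mathcal{A}}(x)$, so $\{b_n\}$ is massive if and only if $\mathcal{A}$ is. Moreover $b_n=a_n-a_0\sim a_n$ as $n\to\infty$, whence $b_n^{\alpha-1}\sim a_n^{\alpha-1}$ and the series $\sum_n b_n^{\alpha-1}$ and $\sum_n a_n^{\alpha-1}$ diverge together. Applying Theorem~\ref{thmSuperlin} to the superlinear sequence $\{b_n\}$ then yields the desired equivalence for $\mathcal{A}$, and these last transfer steps are routine.
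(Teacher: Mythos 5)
Your proof is correct, and its core step --- the term-by-term comparison $\Delta a_{k+i}\geq \Delta a_i$, which telescopes into the superadditivity $b_n\geq b_{n-k}+b_k$, followed by an appeal to Theorem~\ref{thmSuperlin} --- is exactly the paper's argument. The difference is your translation preprocessing, and it is not an idle precaution: the paper simply sets $a_0=0$ and writes $a_n=\sum_{j=1}^k\Delta a_j+\sum_{j=k+1}^n\Delta a_j\geq a_k+a_{n-k}$, but with $a_0=0$ the first increment is $\Delta a_1=a_1$, and the term-by-term comparison then needs $\Delta a_{k+1}\geq a_1$, which the stated hypothesis (increments non-decreasing for $n\geq 2$) does not provide. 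Your examples show the conclusion of that comparison can genuinely fail: for $a_n=n^2+100$ (or even $a_n=n+100$) the inequality $a_n\geq a_{n-k}+a_k$ is false for small $k$, so such sequences are not superlinear and Theorem~\ref{thmSuperlin} cannot be invoked directly. Your fix --- choosing $a_0=2a_1-a_2$ so that the extended increment sequence is non-decreasing from the start, passing to $b_n=a_n-a_0$, and transferring back via translation invariance of massiveness (spatial homogeneity of $S_\alpha$) and $b_n\sim a_n$ --- costs two routine steps but makes the corollary valid under its literal hypothesis, whereas the paper's proof, read strictly, tacitly assumes in addition that $a_1\leq a_2-a_1$. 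In short: same route, but your version closes a small gap that the paper (following Bucy) glosses over.
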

\begin{proof}
The proof of the corollary is identical to that of \textsc{Bucy} \cite[Sub-Corollary 4.1]{bucy}.
By the previous theorem it is sufficient to show that $a_n\geq a_{n-k}+a_k$. Set $a_0=0$. For any $k<n$ we have
\begin{align*}
a_n=\sum _{j=1}^n\Delta a_j=\sum _{j=1}^k\Delta a_j+\sum _{j=k+1}^n\Delta a_j\geq \sum _{j=1}^k\Delta a_j+\sum _{j=1}^{n-k}\Delta a_j=a_k+a_{n-k}.
\end{align*}
\end{proof}
In the following lemma we give useful estimates of the capacity $\kap (B)$.
\begin{lemma}
The capacity $\kap(B)$ of a set $B$ can be estimated as follows
\begin{align}
\frac{|B|}{\max _{a\in B}\sum _{b\in B}G_\alpha (a,b)}\leq \kap (B)\leq \frac{|B|}{\min _{a\in B}\sum _{b\in B}G_\alpha (a,b)}.\label{eq123}
\end{align}
\begin{proof}
Let $\phi _B$ be the equilibrium distribution for $B$, that is $G_\alpha \phi _B\leq 1$ and $G_\alpha \phi _B=1$ on $B$. We have
\begin{align*}
|B|&=\sum _{a\in B} \sum _{b\in B}G_\alpha (a,b)\phi _B(b)\\
&=\sum _{b\in B} \phi _B(b)\sum _{a\in B}G_\alpha (a,b)\leq \kap(B)\max _{b\in B}\sum _{a\in B}G_\alpha (a,b).
\end{align*}
Similarly
\begin{align*}
|B|\geq \kap(B)\min _{b\in B}\sum _{a\in B}G_\alpha (a,b)
\end{align*} 
and the proof is finished.
\end{proof}
\end{lemma}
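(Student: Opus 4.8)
The plan is to read off the bounds directly from the defining property of the equilibrium distribution by a weighted-averaging argument, with a single interchange of summation order as the only moving part. First I would recall that the equilibrium measure $\phi _B$ of $B$ is non-negative, supported on $B$, and satisfies $G_\alpha \phi _B = 1$ at every point of $B$. Summing this identity over all $a\in B$ and expanding the potential gives
\begin{align*}
|B| = \sum _{a\in B} G_\alpha \phi _B(a) = \sum _{a\in B}\sum _{b\in B} G_\alpha (a,b)\phi _B(b).
\end{align*}
Because every summand is non-negative, I may freely interchange the two sums (Tonelli), which produces
\begin{align*}
|B| = \sum _{b\in B}\phi _B(b)\sum _{a\in B} G_\alpha (a,b).
\end{align*}

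Next I would control the inner sum $\sum _{a\in B} G_\alpha (a,b)$ uniformly in $b$. Replacing it by its maximum over $b\in B$ gives an upper bound for $|B|$, and by its minimum a lower bound; since $\phi _B(b)\geq 0$ and $\sum _{b\in B}\phi _B(b)=\kap(B)$, this yields the sandwich
\begin{align*}
\kap(B)\,\min _{b\in B}\sum _{a\in B} G_\alpha (a,b)\ \leq\ |B|\ \leq\ \kap(B)\,\max _{b\in B}\sum _{a\in B} G_\alpha (a,b).
\end{align*}
Rearranging the left-hand inequality produces the upper bound on $\kap(B)$ in (\ref{eq123}), and rearranging the right-hand inequality produces the lower bound. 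Finally, the symmetry $G_\alpha (a,b)=G_\alpha (b,a)$ of the Green function lets me relabel the summation and optimization indices, so that $\max _{b\in B}\sum _{a\in B} G_\alpha (a,b)$ becomes $\max _{a\in B}\sum _{b\in B} G_\alpha (a,b)$ (and likewise for the minimum), matching the stated form.

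This is in essence a double-counting estimate, so I do not expect any serious obstacle. The only two points requiring a word of justification are the interchange of the order of summation, which is legitimate because $G_\alpha$ and $\phi _B$ are non-negative, and the index relabeling in the last step, which relies on the symmetry of $G_\alpha$ already available from the translation-invariant, symmetric transition function $p_\alpha$; both are routine.
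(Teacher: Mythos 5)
Your proof is correct and follows essentially the same route as the paper's: summing the equilibrium identity $G_\alpha \phi _B=1$ over $B$, interchanging the order of summation, and bounding the inner sum $\sum _{a\in B}G_\alpha (a,b)$ by its extrema to obtain the sandwich $\kap(B)\min \leq |B| \leq \kap(B)\max$. The only addition is your explicit appeal to the symmetry of $G_\alpha$ to relabel the indices into the stated form, a point the paper leaves implicit.
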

\begin{example}
We show that the condition $a_n\geq a_{n-k}+a_k$ in the Theorem \ref{thmSuperlin} cannot be dropped. We adjust an example from \textsc{Bucy} \cite{bucy} to our setting. Let $A=\bigcup _{n\geq 1}A_n$, where 
\begin{align*}
A_n=\{ k\in \gN : 2^n\leq k<2^n(1+n^{-\gamma }) \}
\end{align*}
and $\gamma =\frac{2}{1-\alpha }$. The set $A=\{a_n\}$ does not satisfy the condition $a_n\geq a_{n-k}+a_k$. Since $|A_n|\rightarrow \infty$, for all $n$ large enough we can find $a_k\in A_n$ such that $a_{k-1}=a_k-1$. Then evidently $a_{k-1}+a_1> a_k$ because $a_1\geq 2$. We claim that
\begin{align*}
\sum _{n=1}^\infty \frac{1}{a_n^{1-\alpha}} =\infty ,
\end{align*}
while the set $A $ is not massive. Indeed, for some $c,c'>0$ we have
\begin{align*}
\sum _{n=1}^\infty \frac{1}{a_n^{1-\alpha}}&=\sum _{n=1}^\infty \sum_{k\in A_n}\frac{1}{k^{1-\alpha }}\\
&\geq c\sum _{n=1}^\infty \frac{2^n}{n^{\gamma }}\frac{2^{n(\alpha -1)}}{(1+1/n^{\gamma })^{1-\alpha}}
\geq c'\sum _{n=1}^\infty \frac{2^{n\alpha}}{n^{\gamma}}=\infty .
\end{align*}
Using (\ref{eq123}) we obtain
\begin{align*}
\kap (A_n)&\leq \frac{|A_n|}{\min _{a\in A_n}\sum _{b\in A_n}G_\alpha (a,b)}\leq c_1 \frac{2^n}{n^\gamma }\Big( \sum _{k=1}^{[\frac{2^{n}}{n^{\gamma }}]}\frac{1}{k^{1-\alpha}} \Big)^{-1}\\
&\leq c_2 \frac{2^n}{n^\gamma }\Big(\int _1^{\frac{2^n}{n^{\gamma }}}\frac{\ud x}{x^{1-\alpha}}\Big)^{-1}
\leq c_3\frac{2^{n(1-\alpha)}}{n^2},
\end{align*}
for some $c_1,c_2,c_3>0$.
It follows that
\begin{align*}
\sum _{n=1}^\infty \frac{\kap (A_n)}{2^{n(1-\alpha)}}\leq c\sum _{n=1}^\infty n^{-2}<\infty . 
\end{align*}
By the test of massiveness the set $A$ is not massive.
\end{example}
Here are interesting examples of sequences where Theorem \ref{thmSuperlin} apply. Let $a_n=[h(n)]$, where $h$ is a regularly varying function of index $\beta > 0$ such that $h\in C^1$ in some neighbourhood of infinity and
\begin{align*}
xh^{\prime}(x)/h(x)\rightarrow \beta ,\quad \textrm{as}\ x\rightarrow \infty .
\end{align*}
For instance, one can choose the following functions:
\begin{align*}
f(x)=x^\beta,\qquad f(x)=x^\beta \log ^\gamma (x),\qquad f(x)=x^\beta \exp(a\log ^\gamma x),\ 0<\gamma <1 .
\end{align*}
We claim that $\mathcal{A}=\{ a_n\}$ is $S_\alpha$-massive if and only if $\beta \leq 1/(1-\alpha)$.  
\begin{itemize}
\item[(i)] Assume that $\beta > 1$. Let us show that $\{a_n\}$ satisfies the condition $a_n\geq a_{n-k}+a_k$ for $k$ and $n-k$ large enough. The function $h(x)$ can be represented in the form $h(x)=\big(x v (x)\big)^{\beta ^*}$, where $v(x)$ is eventually non-decreasing and $1<\beta ^*< \beta $. Indeed, for an appropriately chosen slowly varying function $l$ we have
\begin{align*}
h(x)=x^\beta l(x)=x^{\beta ^*}\big( x^{\frac{\beta}{\beta ^*}-1}l^{\frac{1}{\beta ^*}}(x)\big)^{\beta ^*}
=x^{\beta ^*}v^{\beta ^*}(x).
\end{align*}
Let us show that $v(x)=\big(x^{-\beta ^*}h(x)\big)^{1/\beta ^*}$ is eventually increasing. Indeed, taking derivative we obtain
\begin{align*}
v^{\prime}(x)=x^{-2}h^{1/\beta ^{*}}(x)\Big(\frac{1}{\beta ^{*}}\frac{xh^{\prime}(x)}{h(x)}-1\Big)>0,
\end{align*}
for $x>k_0>1$ large enough.
For $k$ and $n $ such that $\min \{k,n-k\}>k_0$, we have
\begin{align*}
a_{n-k}+a_{k}&\leq \Big[(n-k)^{\beta ^*} v ^{\beta ^*} (n-k) + k^{\beta ^*} v ^{\beta ^*} (k)\Big]\\
&\leq \Big[\big((n-k)v(n-k)+kv(k)\big)^{\beta ^*} \Big]\\
&\leq [\big(nv(n)\big)^{\beta ^*} ]=a_n.
\end{align*}
Thus $\mathcal{A}=\{a_n\}$ is eventually superlinear. We apply Theorem \ref{thmSuperlin} to conclude that the set $\mathcal{A}$ is $S_\alpha$-massive if and only if $\beta \leq 1/(1-\alpha )$.
\item[(ii)] Assume now that $0<\beta \leq 1$. In this case $\mathcal{A}$ is $S_\alpha$-massive. Indeed, we apply the test of massiveness. Let $h(x)=x^\beta l^\beta (x)$, where $l$ is a slowly varying function. Let $\pi _\mathcal{A}(x)$ be the distribution function of the sequence $\mathcal{A}$. According to \cite[Proposition 1.5.15]{Bingham}, 
\begin{align*}
\pi _\mathcal{A}(x)\sim  x^{1/\beta}l^{\#}(x^{1/\beta}),
\end{align*}  
where $l^{\#}$ is the de Bruijn conjugate of the function $l$, that is, the slowly varying function which is unique up to asymptotic equivalence and satisfies  
\begin{align*}
l(x)l^{\#}(xl(x))\rightarrow 1,\quad l^{\#}(x)l(xl^{\#}(x))\rightarrow 1,\quad \textrm{as}\ \  x\rightarrow \infty .
\end{align*} 
Set $\mathcal{A}_n=\mathcal{A}\cap [2^n,2^{n+1})$. Applying the inequality (\ref{isoperim}) we get
\begin{align*}
\kap (\mathcal{A}_n)\geq c_1|\mathcal{A}_n|^{1-\alpha }\geq c_2(\pi _\mathcal{A}(2^n))^{1-\alpha},
\end{align*}
for some $c_1,c_2>0$. Hence
\begin{align*}
\sum _{n=1}^\infty \frac{\kap (\mathcal{A}_n)}{2^{n(1-\alpha)}}\geq c_3\sum _{n=1}^\infty 2^{n(1-\alpha)(1/\beta -1)}(l^{\#}(2^{n/\beta})^{1-\alpha}=\infty ,
\end{align*}
for some $c_3>0$. The claim follows.
\end{itemize}

\par We do not know whether the set $\mathcal{P}=\{p_n:\, n\in \gN \}$ (or the resulting set after omitting finitely many terms) of successive primes is superlinear that is, for some $n_0\in \gN$,  $p_{n-n_0}\geq  p_{n-k-n_0}+p_{k-n_0}$, for all $n_0<k<n$. 
What is true is that the successive differences $\Delta p_n= p_{n}-p_{n-1}$ do not form an increasing sequence. Indeed, it is easy to show that for any $A>0$ the set of $p_n\in \mathcal{P}$ such that $p_{n+1}-p_n\geq A$ is infinite. On the other hand, according to the recent paper of Zhang \cite{Zhang} the set of $n$ such that $p_{n+1}-p_n< B$ is also infinite for some $B>0$. Thus we cannot rely on Corollary \ref{convex}.
\par In order to study $S_\alpha$-massiveness of the set of prime numbers or its subsets we apply the test of massiveness. We owe to say that our work was strongly inspired by the paper of McKean \cite{McKean} about massiveness of the set of primes with respect to the simple random walk in $\gZ ^3$.  
\begin{thm}\label{Primes}
The set of primes $\cP$ is $S_\alpha$-massive for all $0<\alpha <1$.
\end{thm}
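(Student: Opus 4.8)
The plan is to apply the test of massiveness directly to $\cP$. Writing $\cP_n=\{p\in\cP:\,2^n\leq p<2^{n+1}\}$, it suffices to prove that
\begin{align*}
\sum_{n\geq 1}\frac{\kap(\cP_n)}{2^{n(1-\alpha)}}=\infty.
\end{align*}
Rather than try to compute these capacities exactly, I would bound each $\kap(\cP_n)$ from below using the isoperimetric-type inequality (\ref{isoperim}), which gives
\begin{align*}
\kap(\cP_n)\geq c\,|\cP_n|^{1-\alpha}
\end{align*}
with $c=c(\alpha)>0$. This reduces the whole problem to a lower estimate for the number of primes lying in the dyadic block $[2^n,2^{n+1})$.

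The key number-theoretic input is the Prime Number Theorem, $\pi(x)\sim x/\log x$. Applying it at both endpoints,
\begin{align*}
|\cP_n|=\pi(2^{n+1})-\pi(2^n)\sim\frac{2^{n+1}}{(n+1)\log 2}-\frac{2^n}{n\log 2}\sim\frac{2^n}{n\log 2},
\end{align*}
so that $|\cP_n|\asymp 2^n/n$ for $n$ large. (In fact only a lower bound of order $2^n/n$ for $|\cP_n|$ is needed, so a Chebyshev-type estimate would already suffice.) Inserting this into the capacity bound yields
\begin{align*}
\kap(\cP_n)\geq c\Big(\frac{2^n}{n}\Big)^{1-\alpha}=c\,\frac{2^{n(1-\alpha)}}{n^{1-\alpha}}
\end{align*}
for all sufficiently large $n$.

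Finally, I would substitute this estimate into the massiveness series:
\begin{align*}
\sum_{n\geq 1}\frac{\kap(\cP_n)}{2^{n(1-\alpha)}}\geq c\sum_{n\geq 1}\frac{1}{n^{1-\alpha}}.
\end{align*}
Since $0<\alpha<1$ forces the exponent $1-\alpha$ to lie in $(0,1)$, the right-hand side is a divergent $p$-series, so the left-hand side is infinite, and the test of massiveness then gives that $\cP$ is $S_\alpha$-massive. The only genuine subtlety in this argument is making sure that subtracting the two asymptotics for $\pi(2^{n+1})$ and $\pi(2^n)$ still leaves a term of the correct order $2^n/n$; one must track the error terms in the Prime Number Theorem carefully (or bypass the issue altogether by invoking only the lower Chebyshev bound on $\pi$). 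Beyond this, everything is a direct application of the stated capacity estimate (\ref{isoperim}) together with the test of massiveness.
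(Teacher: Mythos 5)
Your proposal is correct and follows essentially the same route as the paper's own proof: the same dyadic decomposition $\cP_n=\cP\cap[2^n,2^{n+1})$, the same lower bound $\kap(\cP_n)\geq c|\cP_n|^{1-\alpha}$ from (\ref{isoperim}), the Prime Number Theorem to get $|\cP_n|\asymp 2^n/n$, and the divergence of $\sum n^{\alpha-1}$ via the test of massiveness. Your remark that a Chebyshev-type lower bound on $\pi$ already suffices (avoiding the subtraction of asymptotics) is a minor but valid refinement of the same argument.
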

\begin{proof}
Let $\mathcal{P}_n = \mathcal{P}\cap [2^n, 2^{n+1})$. Let $\pi (x)$ be the density function of the sequence $\{p_n\}$ of successive primes. By the Prime Number Theorem,
\begin{align*}
\pi (x)\sim \frac{x}{\log x},\quad \textrm{at}\ \infty ,
\end{align*}
whence
\begin{align}
c_1\frac{2^n}{ n}\leq |\mathcal{P}_n|\leq c_2\frac{2^n}{ n},\label{piBounds}
\end{align}
for some constants $c_1,c_2>0$.
The inequality (\ref{isoperim}) yields
\begin{align*}
\kap (\mathcal{P}_n)&\geq c_3|\mathcal{P}_n|^{1-\alpha }
\geq c_4\Big(\frac{2^n}{n}\Big)^{1-\alpha},
\end{align*}
for some $c_3,c_4>0$.
Finally applying the test of massiveness we obtain
\begin{align*}
\sum _{n>1}\frac{\kap (\mathcal{P}_n)}{2^{n(1-\alpha)}}\geq c\sum _{n>1}\frac{1}{n^{1-\alpha }}=\infty
\end{align*}
for some $c>0$. The proof is finished.
\end{proof}
Examples below show that there are non-trivial proper subsets of $\mathcal{P}$ which are $S_\alpha$-massive.
\begin{example}
Let $h$ be a smoothly varying function of index $\beta >0$. Assume that $h$ satisfies the conditions from the paper \textsc{Leitmann} \cite{leitman} (1977) (see also recent paper \textsc{Mirek} \cite{mirek}). For instance, $h$ is one of the following functions
\begin{align*}
h(x)=x^\beta,\qquad h(x)=x^\beta \log ^\gamma (x),\qquad h(x)=x^\beta \exp(a\log ^\gamma x),\ 0<\gamma <1. 
\end{align*}
Let $\mathcal{P}_h$ be the set of primes of the form $p=[h(n)]$ and 
\begin{align*}
\pi _h(x)=\#\{p\in \cP _h:\, p\leq x\}.
\end{align*}
It was shown in \cite{leitman} that
\begin{align*}
\pi _h(x)\sim \frac{\phi (x)}{\log x}\qquad \textrm{at}\ \ \infty ,
\end{align*} 
where $\phi $ is inverse of $h$ and $\beta \in [1,\frac{12}{11})$. 
\par Among the variety of the classes $\cP _h$ we would like to mention the class $\cP _\beta $ of Piatetski-Shapiro primes \textsc{Piatetski-Shapiro} \cite{piatetski} (1953) which is defined by the function $h(x)=x^{\beta }$. It has been recently proved in \textsc{Rivat and Sargos} \cite{sargos} that for the class $\cP _\beta $ the interval $[1,\frac{12}{11})$ can be enlarged to $[1,\frac{2817}{2426})$. 
\par \textbf{Claim.} \textit{For every $\alpha \in (0,\frac{391}{2817})$, the set  $\mathcal{P}_\beta $ is not $S_\alpha$-massive.}\\
\vspace*{0,1cm}
Indeed, by Theorem \ref{thmSuperlin} it is enough to check that 
\begin{align*}
\sum _{n\in \cP _\beta }\frac{1}{n^{1-\alpha}}<\infty .
\end{align*}
Let $\beta \in [1,\frac{2817}{2426})$ and $\alpha +1/\beta <1$. 
We have
\begin{align*}
\sum _{n\in \cP _\beta }\frac{1}{n^{1-\alpha}}= \int _2^\infty \frac{\ud \pi _\beta (x)}{x^{1-\alpha}}=-\frac{1}{2^{1-\alpha}}+(1-\alpha)\int _2^\infty \frac{ \pi _\beta (x)\ud x}{x^{2-\alpha}}\\
\end{align*}
and, since  $\pi _\beta (x)\sim x^{1/\beta}/\log x$ at infinity,
\begin{align*}
\int _2^\infty \frac{ \pi _\beta (x)\ud x}{x^{2-\alpha}} \asymp \int _2^\infty \frac{\ud x}{x^{2-\alpha -1/\beta}\log x}<\infty .
\end{align*}
The claim follows.
\par On the other hand, choose $h(x)=x\log ^Cx$, $C>0$. Let us show that the set $\cP _h$ is $S_\alpha$-massive for all $\alpha \geq C/(1+C) $. Indeed, we have
\begin{align*}
\pi _h(x)\sim \frac{x}{\log ^{1+C}(x)}\quad \textrm{at}\ \infty .
\end{align*}
As in the proof of Theorem \ref{Primes} we obtain
\begin{align*}
\kap (\cP _{h}\cap [2^n,2^{n+1}))\geq c_1\big(\pi _h(2^n)\big)^{1-\alpha }\geq c_2\frac{2^{n(1-\alpha )}}{n^{(1+C)(1-\alpha)}}.
\end{align*}
At last the test of massiveness yields the result.
\end{example}
\section{Massive subsets of $\gZ ^2$}
Let $S_\alpha $ be the subordinated random walk in $\mathbb{Z} ^2$ as defined in Introduction. $S_\alpha$ is transient for all $0< \alpha <2$. According to \cite[Theorem 2.4]{Cygan} its Green function satisfies 
\begin{align}
G_\alpha (x,y)\sim  \frac{\Gamma \big( 1-\frac{\alpha}{2}\big)}{\pi \Gamma \big( \frac{\alpha}{2}\big) }\, \Vert x-y\Vert ^{\alpha -2} .\label{Green2}
\end{align}
\begin{prop}\label{prop12}
Let $B=\gN \times \{0\}\subset \mathbb{Z} ^2$. The set $B$ is $S_\alpha$-massive if and only if $1\leq \alpha <2$.
\end{prop}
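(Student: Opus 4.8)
The plan is to apply the test of massiveness directly to the sets $B_n=\{b\in B:\ 2^n\le \Vert b\Vert<2^{n+1}\}$, reducing everything to a sharp two-sided estimate of $\kap(B_n)$. Since $B=\gN\times\{0\}$, the set $B_n$ consists of the collinear points $(k,0)$ with $2^n\le k<2^{n+1}$, so $|B_n|\asymp 2^n$. To bound the capacity I would use the sandwich inequality (\ref{eq123}), which for our set reads
\begin{align*}
\frac{|B_n|}{\max_{a\in B_n}\Sigma_n(a)}\le \kap(B_n)\le \frac{|B_n|}{\min_{a\in B_n}\Sigma_n(a)},\qquad \Sigma_n(a)=\sum_{b\in B_n}G_\alpha(a,b).
\end{align*}
Thus the whole problem comes down to controlling the sums $\Sigma_n(a)$.

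The key input is the Green-function behaviour in $\gZ^2$. Combining the asymptotic (\ref{Green2}), valid for large $\Vert x-y\Vert$, with the boundedness $G_\alpha\le G_\alpha(0,0)<\infty$ coming from transience, one obtains $G_\alpha((j,0),(k,0))\asymp (1+|j-k|)^{\alpha-2}$ for all $j,k$. Hence for $a=(j,0)\in B_n$,
\begin{align*}
\Sigma_n(a)\asymp \sum_{2^n\le k<2^{n+1}}(1+|j-k|)^{\alpha-2}\asymp \sum_{m=0}^{2^n}(1+m)^{\alpha-2},
\end{align*}
the last comparison holding uniformly in $a\in B_n$, since interior and endpoint positions of the segment differ only by a bounded factor and therefore $\max_a\Sigma_n(a)\asymp \min_a\Sigma_n(a)$. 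Evaluating the elementary sum $\sum_{m=1}^{2^n}m^{\alpha-2}$ splits into three regimes: it is $\asymp 1$ when $0<\alpha<1$ (the exponent $\alpha-2<-1$ gives a convergent tail), $\asymp n$ when $\alpha=1$ (the harmonic sum up to $2^n$), and $\asymp 2^{n(\alpha-1)}$ when $1<\alpha<2$.

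Feeding these into (\ref{eq123}) yields $\kap(B_n)\asymp 2^n$ for $0<\alpha<1$, $\kap(B_n)\asymp 2^n/n$ for $\alpha=1$, and $\kap(B_n)\asymp 2^{n(2-\alpha)}$ for $1<\alpha<2$. It then remains to insert these into the test of massiveness with $d=2$, i.e. to examine $\sum_n \kap(B_n)/2^{n(2-\alpha)}$. In the first regime this sum is $\asymp \sum_n 2^{n(\alpha-1)}<\infty$, so $B$ is non-massive; in the second it is $\asymp \sum_n 1/n=\infty$, and in the third $\asymp\sum_n 1=\infty$, so $B$ is massive. This gives exactly massiveness if and only if $1\le\alpha<2$.

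The step I expect to require the most care is the uniform two-sided control of $\Sigma_n(a)$ over $a\in B_n$: verifying that endpoint and interior points of the segment produce comparable sums, and that the near-diagonal terms (where the asymptotic (\ref{Green2}) is not yet accurate) contribute only a bounded, hence negligible, amount relative to the bulk. This is what legitimises replacing $G_\alpha$ by $(1+|j-k|)^{\alpha-2}$ and makes the $\max$ and $\min$ in (\ref{eq123}) of the same order, which is decisive precisely at the borderline value $\alpha=1$.
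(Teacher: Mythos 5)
Your proposal is correct and follows essentially the same route as the paper's own proof: both rest on the sandwich inequality (\ref{eq123}) combined with the test of massiveness, both reduce the capacity estimate to the Green-sum $\sum_{b\in B_n}G_\alpha(a,b)$ evaluated via (\ref{Green2}) in the same three regimes ($\asymp 1$ for $\alpha<1$, $\asymp n$ for $\alpha=1$, $\asymp 2^{n(\alpha-1)}$ for $1<\alpha<2$), and both conclude identically from $\sum_n \kap(B_n)/2^{n(2-\alpha)}$. The only difference is cosmetic: you spell out the uniform comparability of the sums over $a\in B_n$ and the harmlessness of near-diagonal terms, which the paper absorbs silently into its constants $c_1,\ldots,c_6$.
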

\begin{proof}
We use the inequality (\ref{eq123}) and the test of massiveness. Let $A_n=B\cap \{x\in \gZ ^2: 2^n\leq \Vert x\Vert _{\infty }<2^{n+1}\}$ then
\begin{align*}
\max _{x\in A_n}\sum _{y\in A_n}G_{\alpha }(x,y)&\leq c_1 \Big(1+\!\!\! \!\sum _{2^n<y<2^{n+1}}\big|2^n-y\big|^{\alpha -2}\Big)\leq c_2\int _1^{2^n}t^{\alpha -2}\ud t\\
&\leq c_3\left\{ \begin{array}{ll}
1, & \alpha <1\\
n, & \alpha =1\\
2^{n(\alpha -1)}, & 1< \alpha < 2,
\end{array} \right.
\end{align*}
for some constants $c_1,c_2,c_3>0$. Similarly,
\begin{align*}
\min _{x\in A_n}\sum _{y\in A_n}G_{\alpha }(x,y)&\geq c_4 \Big(1+\!\!\! \!\sum _{3\cdot 2^{n-1}<y<2^{n+1}}\big|3\cdot 2^{n-1}-y\big|^{\alpha -2}\Big)\geq c_5\int _1^{2^{n-1}}t^{\alpha -2}\ud t\\
&\geq c_6\left\{ \begin{array}{ll}
1, & \alpha <1\\
n, & \alpha =1\\
2^{n(\alpha -1)}, & 1< \alpha < 2,
\end{array} \right.
\end{align*}
for some $c_4,c_5,c_6>0$. 
Thus we have
\begin{align*}
\frac{\kap(A_n)}{2^{n(2-\alpha)}}\asymp \left\{ \begin{array}{ll}
2^{n(\alpha -1)}, & 0<\alpha <1\\
n^{-1}, & \alpha =1\\
1, & 1< \alpha < 2.
\end{array} \right.
\end{align*}
Finally the test of massiveness yields the result.
\end{proof}
\par A set $\cA \subset \gZ ^2$ is called \textit{radially bounded} if there exists $N>0$ such that for any $r>0$
\begin{align*}
\#\{a\in \cA:\, \Vert a\Vert _\infty =r\}\leq N.
\end{align*}
A radially bounded set $\cA$ is called \textit{superlinear} if the set of numbers $\{\Vert x\Vert _\infty :\, x\in \cA \}$ enumerated in the increasing order is superlinear as defined at (\ref{superlin}).
\par Let $B=\gN \times \{0\} $. When $B$ is not $S_\alpha$-massive (i.e. $0<\alpha < 1$) none of the radially bounded sets is $S_\alpha$-massive whereas if $B$ is $S_\alpha$-massive (i.e. $1\leq \alpha <2$) among radially bounded sets there are $S_\alpha$-massive as well as non--$S_\alpha$-massive sets.  
\begin{thm}
Assume that $1\leq \alpha <2$ and that the set $\cA \subset \gZ ^2$ is superlinear. Then $\cA$ is $S_\alpha$-massive if and only if 
\begin{align*}
\sum _{a\in \cA} \frac{1}{\Vert a\Vert _\infty ^{2-\alpha}}=\infty .
\end{align*}
\end{thm}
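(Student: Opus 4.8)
The plan is to follow closely the proof of Theorem \ref{thmSuperlin}, replacing the one-dimensional Green asymptotic (\ref{Green1}) by its planar counterpart (\ref{Green2}) and reducing the geometry of $\cA$ to that of its radial profile. List the distinct values $\{\Vert a\Vert _\infty : a\in \cA\}$ in increasing order as $\rho _1<\rho _2<\cdots$; by hypothesis this sequence is superlinear in the sense of (\ref{superlin}), and radial boundedness means that each level $L_k=\{a\in \cA:\Vert a\Vert _\infty =\rho _k\}$ has cardinality $m_k$ with $1\le m_k\le N$. Since $1\le m_k\le N$,
\begin{align*}
\sum _{a\in \cA}\frac{1}{\Vert a\Vert _\infty ^{2-\alpha}}=\sum _k m_k\,\rho _k^{\alpha -2}\asymp \sum _k \rho _k^{\alpha -2},
\end{align*}
so the stated condition is equivalent to $\sum _k\rho _k^{\alpha -2}=\infty$, and it is this sequence of distinct radii to which superlinearity will be applied. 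Two geometric facts drive everything: for $a\in L_k$ and $b\in L_l$ the triangle inequality gives $\Vert a-b\Vert \ge \Vert a-b\Vert _\infty \ge |\rho _k-\rho _l|$, while $\Vert a-b\Vert \le \sqrt 2\,(\rho _k+\rho _l)$. Together with (\ref{Green2}) and $G_\alpha (x,y)\le G_\alpha (0,0)$ these yield, for $k\ne l$, the comparison $G_\alpha (a,b)\le C\,|\rho _k-\rho _l|^{\alpha -2}$ and $G_\alpha (a,b)\ge c\,(\max (\rho _k,\rho _l))^{\alpha -2}$, and $G_\alpha (a,b)\le G_\alpha (0,0)$ when $k=l$.

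The convergence direction is then immediate, as in Theorem \ref{thmSuperlin}: if $\sum _k\rho _k^{\alpha -2}<\infty$, then $\sum _{a\in\cA}G_\alpha (0,a)\asymp \sum _{a\in\cA}\Vert a\Vert _\infty ^{\alpha -2}<\infty$, so by Borel--Cantelli $S_\alpha$ visits $\cA$ only finitely often almost surely and $\cA$ is non-massive. For the divergence direction assume $\sum _k\rho _k^{\alpha -2}=\infty$ and, for contradiction, that $\cA$ is not massive with equilibrium measure $\phi$. Enumerate the points as $a_1,a_2,\dots$ with non-decreasing radii $r_m=\Vert a_m\Vert _\infty$ and set
\begin{align*}
F_N(m)=\frac{\sum _{n=1}^N r_n^{\alpha -2}\,G_\alpha (a_m,a_n)}{\sum _{n=1}^N r_n^{\alpha -2}}.
\end{align*}
Averaging the equilibrium identity (\ref{Hopf}) over $a_1,\dots ,a_N$ with weights $r_n^{\alpha -2}$ gives the analogue of (\ref{eqFN}), namely $\sum _m F_N(m)\phi (a_m)=1$. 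The argument for (\ref{LIMFN}) carries over verbatim (using the upper Green bound and $r_n^{\alpha -2}\to 0$) to give $\lim _N F_N(m)=0$ for each fixed $m$.

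The crucial step is the analogue of the Claim (\ref{FN1}), i.e.\ $F_N(m)\le c\,r_m^{\alpha -2}$ uniformly in $m,N$. Here I would regroup the numerator by levels: for $a_m\in L_K$ the contribution of the levels $L_k$ with $k\ne K$ is bounded, via the upper Green estimate and $m_k\le N$, by $NC\,\rho _k^{\alpha -2}|\rho _K-\rho _k|^{\alpha -2}$ per level, while the level $k=K$ contributes only the bounded term $N\,G_\alpha (0,0)\,\rho _K^{\alpha -2}$. One then applies the superlinear inequalities
\begin{align*}
\rho _k^{\alpha -2}\,|\rho _K-\rho _k|^{\alpha -2}\le \rho _K^{\alpha -2}\big( \rho _k^{\alpha -2}+\rho _{|K-k|}^{\alpha -2}\big),
\end{align*}
proved exactly as (\ref{seqone})--(\ref{seqtwo}) from $\rho _{|K-k|}\le |\rho _K-\rho _k|$ and the subadditivity $(x+y)^{2-\alpha}\le x^{2-\alpha}+y^{2-\alpha}$, which is valid since $0<2-\alpha \le 1$ precisely because $1\le \alpha <2$. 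Summing over $k$ and dividing by the denominator, which is $\asymp \sum _k\rho _k^{\alpha -2}$, gives the bound; the case $m\ge N$ needs the same separate treatment as in Theorem \ref{thmSuperlin}.

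It remains to produce a summable dominating function. Applying (\ref{Hopf}) at a fixed base point $a_1$ together with the lower Green bound $G_\alpha (a,a_1)\ge c\,\Vert a\Vert _\infty ^{\alpha -2}$ (valid since $\Vert a-a_1\Vert \le C\,\Vert a\Vert _\infty$ for $\Vert a\Vert _\infty$ large) yields $\sum _m r_m^{\alpha -2}\phi (a_m)<\infty$. Since the Claim gives $F_N(m)\phi (a_m)\le c\,r_m^{\alpha -2}\phi (a_m)$, dominated convergence together with $\lim _N F_N(m)=0$ forces $\sum _m F_N(m)\phi (a_m)\to 0$, which contradicts $\sum _m F_N(m)\phi (a_m)=1$; hence $\cA$ is massive. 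The only genuinely new difficulty relative to the one-dimensional Theorem \ref{thmSuperlin} is the bookkeeping of the bounded multiplicity: since superlinearity holds only for the distinct radii $\rho _k$, the point-indexed sums must be reorganised into levels, and one must verify that replacing $\Vert a-b\Vert$ by $|\rho _k-\rho _l|$ in the upper bound and by $\max (\rho _k,\rho _l)$ in the lower bound keeps all constants (and the factors of $N$) under control. I expect that reduction to be the main obstacle, the rest being a faithful transcription of the one-dimensional argument.
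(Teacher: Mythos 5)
Your proposal is correct and takes essentially the same route as the paper, which disposes of this theorem with the single remark that its proof follows line by line that of Theorem \ref{thmSuperlin}, i.e.\ exactly the adaptation you carry out (replacing the asymptotic (\ref{Green1}) by (\ref{Green2}) and applying superlinearity to the increasing sequence of distinct radii). Your level-by-level bookkeeping of the bounded multiplicities and your observation that the subadditivity step still works because $2-\alpha \in (0,1]$ are precisely the details that this line-by-line adaptation requires, and they are handled correctly.
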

Proof of this statement follows line by line the proof of Theorem \ref{thmSuperlin}. 
\par When $0<\alpha <1$ the set $B= \{0\}\times \gN $ is not $S_\alpha$-massive whereas any cone around $B$ is massive. We study massiveness of thorns $\mathcal{T}$ defined as,
\begin{align*}
\mathcal{T}=\{ (x_1,x_2)\in \mathbb{Z} ^2:\, |x_1|\leq t(x_2),\, x_2\geq 1 \},\quad \frac{t(n)}{n}=o(1).
\end{align*}
\begin{thm}
The thorn $\mathcal{T}$ as defined above is $S_\alpha $-massive if and only if
\begin{align*}
\sum _{n\geq 1}\Big(\frac{t(2^n)}{2^n}\Big)^{1-\alpha}=\infty .
\end{align*}
\end{thm}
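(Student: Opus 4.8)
The plan is to apply the test of massiveness, reducing everything to a two-sided estimate of the capacities $\kap(\mathcal{T}_n)$, where $\mathcal{T}_n=\{b\in\mathcal{T}:\,2^n\le\Vert b\Vert<2^{n+1}\}$. Since $t(x_2)=o(x_2)$, on the thorn one has $\Vert(x_1,x_2)\Vert\sim x_2$, so that up to a bounded perturbation near its two horizontal ends the slab $\mathcal{T}_n$ is the set of lattice points $(x_1,x_2)$ with $2^n\le x_2<2^{n+1}$ and $|x_1|\le t(x_2)$; that is, a thin rectangle of height $\asymp 2^n$ and half-width between $t(2^n)$ and $t(2^{n+1})$, using that $t$ is non-decreasing. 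I would first isolate the model computation for a rectangle and then sandwich $\mathcal{T}_n$ between two such rectangles by monotonicity of capacity.

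The model lemma I would prove is: if $R$ is the lattice rectangle of half-width $w\ge1$ and height $h$ with $w=o(h)$, then $\kap(R)\asymp h\,w^{1-\alpha}$. The tool is the two-sided bound (\ref{eq123}), so the heart of the matter is the uniform estimate
\[
\sum_{b\in R}G_\alpha(a,b)\asymp w^{\alpha},\qquad a\in R.
\]
Using the Green asymptotic (\ref{Green2}) and comparing the sum with the integral $\iint_R\Vert a-b\Vert^{\alpha-2}\,db$, I would integrate first in the vertical variable. Writing $s=|a_1-b_1|$, the inner integral $\int_0^h(s^2+u^2)^{(\alpha-2)/2}\,du$ is comparable to $s^{\alpha-1}$; this is exactly where $0<\alpha<1$ enters, since $\int_{-\infty}^{\infty}(1+v^2)^{(\alpha-2)/2}\,dv<\infty$ precisely when $\alpha<1$, which makes the vertical integral converge and the estimate independent of $h$. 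Integrating then in the horizontal variable gives $\int_0^{w}s^{\alpha-1}\,ds\asymp w^{\alpha}$ because $\alpha>0$. Two points need care: the diagonal term $b=a$, where (\ref{Green2}) is unavailable but $G_\alpha$ is bounded, contributes only $O(1)$, absorbed once $w\ge1$; and uniformity in $a$, since for $a$ at a corner of $R$ the vertical integral runs over a half-line and loses only a constant factor, so both the maximum and the minimum in (\ref{eq123}) are $\asymp w^{\alpha}$. Combined with $|R|\asymp h\,w$, this yields $\kap(R)\asymp h\,w^{1-\alpha}$.

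With the model lemma in hand I would sandwich: $\mathcal{T}_n$ contains the rectangle of half-width $t(2^n)$ and is contained in the rectangle of half-width $t(2^{n+1})$, both of height $\asymp 2^n$. Monotonicity of capacity then gives
\[
c\,2^n t(2^n)^{1-\alpha}\le\kap(\mathcal{T}_n)\le C\,2^n t(2^{n+1})^{1-\alpha}.
\]
Feeding this into the test of massiveness, the series $\sum_n\kap(\mathcal{T}_n)2^{-n(2-\alpha)}$ is bounded below by $c\sum_n\big(t(2^n)/2^n\big)^{1-\alpha}$ and above by $C\sum_n\big(t(2^{n+1})/2^{n+1}\big)^{1-\alpha}$; the shift of index and the factor $2^{1-\alpha}$ are harmless, so both bounds converge or diverge simultaneously with $\sum_n\big(t(2^n)/2^n\big)^{1-\alpha}$, which is the asserted criterion.

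The main obstacle is the model lemma, and within it the uniform two-sided potential estimate $\sum_{b\in R}G_\alpha(a,b)\asymp w^{\alpha}$; the delicate features are the convergence of the vertical integral (valid only because $\alpha<1$), the behaviour at the corners of the rectangle, and the diagonal singularity. One should also dispose of the degenerate regime $t(2^n)<1$, where the slab reduces to a bounded perturbation of a single vertical line: there both $\sum_n\kap(\mathcal{T}_n)2^{-n(2-\alpha)}$ and $\sum_n\big(t(2^n)/2^n\big)^{1-\alpha}$ converge, in agreement with Proposition \ref{prop12}, so the criterion remains correct.
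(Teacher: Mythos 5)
Your proposal is correct and follows essentially the route the paper itself intends: the paper gives no self-contained proof but defers to the argument of \cite[Theorem 4.4]{Cygan}, which is precisely this scheme --- dyadic slabs $\mathcal{T}_n$, the test of massiveness, and two-sided capacity bounds obtained from (\ref{eq123}) together with the Green asymptotic (\ref{Green2}), yielding $\kap(\mathcal{T}_n)\asymp 2^n t(2^n)^{1-\alpha}$ exactly as in your model lemma. Your write-up correctly identifies the one point specific to $d=2$, namely that the transversal sum $\sum_u (s^2+u^2)^{(\alpha-2)/2}\asymp |s|^{\alpha-1}$ converges only because $\alpha<1$ (the analogue of $\alpha<d-1$ in higher dimensions), and rightly notes that the cruder isoperimetric bound (\ref{isoperim}) alone would give the wrong exponent $1-\alpha/2$, so the sharper estimate (\ref{eq123}) is indeed indispensable.
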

\begin{flushleft}
The proof of this statement is similar to that of \cite[Theorem 4.4]{Cygan} which is related to the lattice $\gZ ^d$ with $d\geq 3$.
\end{flushleft}
\vspace{0,2cm}
\par Let $\cA$ be a subset of the set $B= \{0\} \times \gN$. Let $\cT $ be a thorn. We define a subthorn $\cT _{\cA}$ related to $\cA$ as
\begin{align*}
\cT _{\cA}=\mathcal{T}\cap \{(x_1,x_2)\in \mathbb{Z} ^2:\, x_2\in \cA\}.
\end{align*}
\begin{figure}[h]
		\begin{tikzpicture}[scale=0.8]
									\draw[densely dashed] (-5,4) to (5,8);
									\draw[densely dashed] (-5,4) to (5,0);
									\draw[densely dashed] (-5,6) to (-3,6) to (-3,2) to (-5,2);
									\draw[densely dashed] (-5,8) to (-1,8) to (-1,0) to (-5,0);
									\draw[densely dashed] (3,0) to (3,8);
									\draw[thick] (-4,4) to [out=28, in=185] (5,7);
									\draw[thick] (-4,4) to [out=332, in=175] (5,1);
									\begin{scope}
										\clip (-4,4) to [out=28, in=185] (5,7)-- (5,1) to [out=175, in=332] (-4,4);
										\clip (-4,3) rectangle (-3,5);
										\fill[color=black, opacity=0.1] (-4,3) rectangle (-3,5);
									\end{scope}
									\begin{scope}
										\clip (-4,4) to [out=28, in=185] (5,7)-- (5,1) to [out=175, in=332] (-4,4);
										\clip (-3,2) rectangle (-1,6);
										\fill[color=black, opacity=0.2] (-3,2) rectangle (-1,6);
									\end{scope}	
									\begin{scope}
										\clip (-4,4) to [out=28, in=185] (5,7)-- (5,1) to [out=175, in=332] (-4,4);
										\clip (-1,1) rectangle (3,7);
										\fill[color=black, opacity=0.4] (-1,1) rectangle (3,7);
									\end{scope}	
									\begin{scope}
										\clip (-4,4) to [out=28, in=185] (5,7)-- (5,1) to [out=175, in=332] (-4,4);
										\clip (3,0) rectangle (5,8);
										\fill[color=black, opacity=0.5] (3,0) rectangle (5,8);;
									\end{scope}	
									\draw[thick] (-5,4) to (5,4);
									\draw (-5.8,3.65) node [right] { {\tiny $(0,0)$}};
									\draw (-3,2) node [right] { {\tiny $2$}};
									\draw (-1,0) node [right] { {\tiny $4$}};
									\draw (3,0) node [right] {{\tiny $8$}};
									\draw (4.2,7.3) node [right] {{\tiny $t(n)$}};
									\draw [fill=black, black] (-2,4) circle [radius=0.1];
									\draw (-2,3.8) node [right] { {\tiny $a_1$}};
									\draw [fill=black, black] (-2,5) circle [radius=0.1];
									\draw [fill=black, black] (-2,3) circle [radius=0.1];
									\draw [fill=black, black] (1,4) circle [radius=0.1];
									\draw (1,3.8) node [right] {{\tiny $a_2$}};
									\draw [fill=black, black] (1,5) circle [radius=0.1];
									\draw [fill=black, black] (1,3) circle [radius=0.1];
									\draw [fill=black, black] (1,2) circle [radius=0.1];
									\draw [fill=black, black] (1,6) circle [radius=0.1];
									\draw [fill=black, black] (2,4) circle [radius=0.1];
									\draw (2,3.8) node [right] {{\tiny $a_3$}};
									\draw [fill=black, black] (2,3) circle [radius=0.1];
									\draw [fill=black, black] (2,5) circle [radius=0.1];
									\draw [fill=black, black] (2,6) circle [radius=0.1];
									\draw [fill=black, black] (2,2) circle [radius=0.1];
									\draw [fill=black, black] (4,4) circle [radius=0.1];
									\draw (4,3.8) node [right] {{\tiny $a_4$}};
									\draw [fill=black, black] (4,3) circle [radius=0.1];
									\draw [fill=black, black] (4,5) circle [radius=0.1];
									\draw [fill=black, black] (4,6) circle [radius=0.1];
									\draw [fill=black, black] (4,2) circle [radius=0.1];
									\draw [fill=black, black] (4,6.8) circle [radius=0.1];
									\draw [fill=black, black] (4,1.2) circle [radius=0.1];
				\end{tikzpicture}
							\caption{The subthorn.}
\end{figure}

Let us recall that a positive function $f$ defined on $[0,\infty )$ is called doubling if there exist a constant $C>0$ and some number $x_0>0$ such that 
\begin{align*}
f(2x)\leq Cf(x),\quad x\geq x_0.
\end{align*}
\begin{prop}\label{prop1}
Let $0<\alpha <1$. Assume that $t(n)=n/L(n)$ 
with $L(n)\rightarrow \infty$. Let $\pi _{\cA}=x/l(x)$ be the density function of the set $\cA$, 
where $l(n)\rightarrow \infty$. Assume that both $t(n)$ and $\pi _{\cA}(n)$ are doubling. Then the subthorn $\cT _{\cA}$ is $S_\alpha$-massive if
\begin{align}
\sum _{n=1}^{\infty} \Big(L(2^n)\, l(2^n)\Big)^{\frac{\alpha}{2}-1}=\infty .\label{series123}
\end{align}
\end{prop}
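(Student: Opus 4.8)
The plan is to apply the test of massiveness directly to $\cT_{\cA}$, using as the single substantive input a lower bound for the capacity of each dyadic piece expressed in terms of its cardinality alone. Writing $B_n = \cT_{\cA}\cap\{2^n\le \Vert x\Vert <2^{n+1}\}$, it suffices by the test of massiveness to show that $\sum_{n}\kap(B_n)\,2^{-n(2-\alpha)}=\infty$.

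First I would estimate $|B_n|$. Since $|x_1|\le t(x_2)=o(x_2)$, every point of $\cT_\cA$ has Euclidean norm $\Vert x\Vert = x_2(1+o(1))$, so up to a factor $1+o(1)$ the piece $B_n$ consists of the horizontal slabs $\{|x_1|\le t(m)\}$ over the heights $m\in\cA$ with $2^n\le m<2^{n+1}$. Each such slab contains at least of order $t(2^n)$ lattice points, by monotonicity of $t$ and its doubling, which also gives $t(m)\asymp t(2^n)$ throughout the block; the number of admissible heights is $\pi_\cA(2^{n+1})-\pi_\cA(2^n)$, and the doubling of $\pi_\cA$ together with $\pi_\cA(x)=x/l(x)$ yields $\pi_\cA(2^{n+1})-\pi_\cA(2^n)\gtrsim 2^n/l(2^n)$. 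Multiplying and inserting $t(n)=n/L(n)$, I obtain
\[
|B_n|\;\gtrsim\;\frac{2^n}{L(2^n)}\cdot\frac{2^n}{l(2^n)}\;=\;\frac{2^{2n}}{L(2^n)\,l(2^n)}.
\]

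The heart of the argument is the capacity bound. I would invoke the two-dimensional form of the isoperimetric capacity estimate, $\kap(B)\ge c\,|B|^{1-\alpha/2}$ for some $c=c(\alpha)>0$, which is the $d=2$ counterpart of (\ref{isoperim}) and follows from the capacity estimates established in Section 4 (Corollary \ref{CorPsi}). Applying it to $B=B_n$ gives
\[
\kap(B_n)\;\ge\;c\,|B_n|^{1-\alpha/2}\;\ge\;c'\Big(\frac{2^{2n}}{L(2^n)\,l(2^n)}\Big)^{1-\alpha/2}.
\]
Since $(2^{2n})^{1-\alpha/2}=2^{n(2-\alpha)}$, dividing by $2^{n(2-\alpha)}$ cancels the exponential factor and leaves
\[
\frac{\kap(B_n)}{2^{n(2-\alpha)}}\;\ge\;c'\big(L(2^n)\,l(2^n)\big)^{\alpha/2-1}.
\]
Summing over $n$ and using the divergence hypothesis (\ref{series123}) shows $\sum_{n}\kap(B_n)\,2^{-n(2-\alpha)}=\infty$, so $\cT_\cA$ is $S_\alpha$-massive by the test of massiveness.

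The conceptual point, and the reason the statement is only a sufficient condition, is that I deliberately use the cardinality-only bound $\kap(B)\ge c|B|^{1-\alpha/2}$ rather than the sharper Green-sum inequality (\ref{eq123}). The sharp bound would require controlling $\max_{a\in B_n}\sum_{b\in B_n}G_\alpha(a,b)$, a quantity sensitive to the \emph{spacing} of the heights of $\cA$ (clusters of nearby slabs inflate it), whereas the hypotheses provide only the counting function $\pi_\cA$, not the gaps between consecutive elements. The isoperimetric bound sidesteps this entirely, since it depends on $|B_n|$ alone, which the doubling assumptions let me compute unconditionally. The only genuinely delicate step is therefore the lower estimate $\pi_\cA(2^{n+1})-\pi_\cA(2^n)\gtrsim 2^n/l(2^n)$, where the doubling of $\pi_\cA$ is precisely what prevents $l$ from growing so fast across a dyadic block that the increment is lost.
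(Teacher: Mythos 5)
Your proposal follows the paper's route exactly: the same dyadic decomposition of $\cT_{\cA}$, the same cardinality estimate $|B_n|\gtrsim t(2^n)\,\pi_{\cA}(2^n)$, the same capacity lower bound $\kap(B)\geq c\,|B|^{1-\alpha/2}$ (Corollary \ref{CorPsi} with $d=2$, $\psi(s)=s^{\alpha/2}$), and the same arithmetic $\big(2^{2n}/(L\,l)\big)^{1-\alpha/2}=2^{n(2-\alpha)}(L\,l)^{\alpha/2-1}$ feeding into the test of massiveness. In that sense there is nothing structurally different from the paper's argument.

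However, the one step you elaborate beyond the paper rests on a false implication. Doubling of $\pi_{\cA}$ means $\pi_{\cA}(2x)\leq C\,\pi_{\cA}(x)$; this bounds the dyadic increments $\pi_{\cA}(2^{n+1})-\pi_{\cA}(2^n)$ from \emph{above} (by $(C-1)\,\pi_{\cA}(2^n)$) and gives no lower bound at all. A set $\cA$ that is empty in some dyadic blocks remains perfectly compatible with $\pi_{\cA}$ doubling and $l\to\infty$ (the counting function is simply flat there), yet on those blocks the increment is zero while $\pi_{\cA}(2^n)$ is not; so your claim that ``doubling is precisely what prevents the increment from being lost'' is not correct --- what that deduction needs is a reverse-doubling hypothesis $\pi_{\cA}(2x)\geq(1+c)\,\pi_{\cA}(x)$, which is not assumed. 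To be fair, the paper's own proof makes the identical leap: it asserts
\begin{align*}
|\cT_{\cA_n}|=\int_{2^n}^{2^{n+1}}t(x)\,\ud\pi_{\cA}(x)\asymp t(2^n)\,\pi_{\cA}(2^n),
\end{align*}
and the lower-bound half of this $\asymp$ is exactly the unproved increment estimate. So your write-up is at parity with the paper, both in structure and in the location of its gap; the difference is that you single the step out and then justify it by an implication that fails. In the paper's applications the step is rescued by extra information (for $\cA=\cP$ the prime number theorem gives $\pi(2x)-\pi(x)\asymp x/\log x$), and one can plausibly argue that under the divergence hypothesis (\ref{series123}) flat stretches of $\pi_{\cA}$ force $l$ to double across each empty block and hence contribute only geometrically decaying terms --- but that is an additional argument which neither you nor the paper supplies.
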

\begin{proof}
We define the subthorns
\begin{align*}
\cT _{\cA _n}=\cT _{\cA}\cap \{x\in \gZ ^2: 2^n\leq \Vert x\Vert _\infty <2^{n+1}\}
\end{align*}
and apply the test of massiveness. We have
\begin{align*}
|\cT _{\cA _n}|=\int _{2^n}^{2^{n+1}}t(x)\ud \pi _\cA (x)\asymp t(2^n)\pi _\cA (2^n),
\end{align*}
whence
\begin{align*}
\kap (\cT _{\cA _n})&\geq c_1|\cT _{\cA _n}|^{1-\alpha /2}\asymp \big(t(2^n)\pi _\cA (2^n)\big)^{1-\alpha /2}.
\end{align*}
At last the test of massiveness yields the result.
\end{proof}
\begin{example}
1. Assume that $t(n)=n/\log \log n$ and $\cA$ be the set of primes $\cP$. By the Proposition \ref{prop1} the subthorn $\cT _{\cP}$ is $S_\alpha$-massive in $\gZ ^2$ for all $0<\alpha <1$. Indeed, we have
\begin{align*}
L(2^n)\, l(2^n)\asymp n\log n
\end{align*}
and since $1-\alpha /2 <1$ the series in (\ref{series123}) diverges.\\
\vspace*{0,2cm}
2. Assume that $t(n)=n/(\log n)^\gamma$ for some $0<\gamma <1$ and let $\cA=\cP$ as above. Let us show that for all $\alpha  \in I=[\frac{2\gamma}{1+\gamma},1)$ the subthorn $\cT _{\cP}$ is $S_\alpha$-massive. Observe that by our assumption the interval $I$ is not empty. We have 
\begin{align*}
L(2^n)\, l(2^n)\asymp n^{1+\gamma}.
\end{align*}
Since $0<\gamma <1$ the series in (\ref{series123}) diverges.
\end{example}
\par Whether $\cT _\cP$ is non-massive for all/some $\alpha \in (0, \frac{2\gamma}{1+\gamma})$ is an open question at the present writing.
\section{Capacity bounds}
The aim of this section is to comment on a general capacity lower bound of the form
\begin{align}
\mathrm{Cap} (B)\geq m (B)N\Big( \frac{1}{m (B)}\Big).\label{iso2}
\end{align}
Various versions of this inequality have been used repeatedly in the previous two sections of this work and in the recent paper \cite{Cygan}. 
\par  
Let $(X, d, m )$ be a locally compact separable metric measure space equipped with a regular Dirichlet form $(\cE , \cF)$, $\cF \subset L_2(m )$.
Consider a Nash-type inequality
\begin{align}
\cE (f,f)\geq \Vert f\Vert ^2_2\, N (\Vert f\Vert ^2_2),\qquad \Vert f\Vert _1=1, \label{Nash}
\end{align}
which can be true or not for some function $N$ satisfying $N(x)=o(x)$ at infinity.
\begin{thm}\label{capbound}
Let $\cN$ and $\cM$ be two complementary Young functions. Assume that the Dirichlet form $(\cE , \cF)$ is transient and that $\cM$ is doubling. Then (\ref{Nash}) implies (\ref{iso2}) with $N=\cN ^{-1}$.
\end{thm}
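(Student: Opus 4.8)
The plan is to run the classical Coulhon-type argument that converts a Nash profile into a bound on Green energy, and then to use the Orlicz duality between $\cN$ and $\cM$ to recast the resulting integral into the form (\ref{iso2}). First I would record two reductions. Homogenizing (\ref{Nash}) by replacing $f$ with $f/\Vert f\Vert _1$ yields, for every $f\in\cF$,
\begin{align*}
\cE(f,f)\geq \Vert f\Vert _2^2\, N\Big(\tfrac{\Vert f\Vert _2^2}{\Vert f\Vert _1^2}\Big),\qquad N=\cN^{-1}.
\end{align*}
Next, for a set $B$ with $m(B)<\infty$ I would use the energy characterization of capacity: with the energy pairing $\langle \nu,\rho\rangle=\iint G\,d\nu\,d\rho$, testing the equilibrium measure $\phi_B$ against any measure $\nu$ supported on $B$ and applying Cauchy--Schwarz gives $\nu(B)^2\leq \mathrm{Cap}(B)\iint G\,d\nu\,d\nu$, since $G\phi_B=1$ on $B$ and $\iint G\,d\phi_B\,d\phi_B=\phi_B(B)=\mathrm{Cap}(B)$. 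Choosing $\nu=m|_B$ reduces the theorem to an upper bound for the double Green integral, namely
\begin{align*}
\mathrm{Cap}(B)\geq \frac{m(B)^2}{\iint _{B\times B}G\,dm\,dm}.
\end{align*}

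The heart of the matter is bounding this integral from above via the heat semigroup $\{P_t\}$ attached to $\cE$. Writing $G=\int_0^\infty P_t\,dt$ and using symmetry and the semigroup property, $\iint_{B\times B}G\,dm\,dm=\int_0^\infty\langle P_t1_B,1_B\rangle\,dt=2\int_0^\infty\varphi(s)\,ds$, where $\varphi(s)=\Vert P_s1_B\Vert _2^2$ satisfies $\varphi(0)=m(B)$, is $C^1$ and decreasing, and tends to $0$ by transience. Differentiating gives $\varphi'(s)=-2\cE(P_s1_B,P_s1_B)$; applying the homogenized Nash inequality to $P_s1_B$, using the $L^1$-contractivity $\Vert P_s1_B\Vert _1\leq m(B)$ and monotonicity of $N$, I obtain the differential inequality $\varphi'(s)\leq -2\varphi(s)\,N(\varphi(s)/m(B)^2)$. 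Separating variables (changing the integration variable from $s$ to $v=\varphi(s)$, which runs from $m(B)$ down to $0$) then yields
\begin{align*}
\iint _{B\times B}G\,dm\,dm\leq m(B)^2\int _0^{1/m(B)}\frac{dw}{N(w)},
\end{align*}
and hence $\mathrm{Cap}(B)\geq\big(\int_0^{1/m(B)}dw/N(w)\big)^{-1}$.

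It remains to compare $\int_0^R dw/N(w)$ with $R/N(R)$ for $R=1/m(B)$, and this is precisely where complementarity and doubling of the Young functions enter. Using the standard relation $\cN^{-1}(t)\,\cM^{-1}(t)\asymp t$ for complementary pairs, one has $1/N(w)\asymp \cM^{-1}(w)/w$ and $R/N(R)\asymp\cM^{-1}(R)$, so the claim reduces to $\int_0^R \cM^{-1}(w)\,w^{-1}\,dw\leq C\,\cM^{-1}(R)$. Since $\cM$ is doubling ($\Delta_2$), its inverse obeys $\cM^{-1}(Ct)\geq 2\,\cM^{-1}(t)$ for a fixed $C>1$; decomposing $(0,R]$ into the geometric intervals $(RC^{-k-1},RC^{-k}]$ and summing the resulting series $\sum_k 2^{-k}\cM^{-1}(R)$ gives the bound. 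Combining the three steps produces $\mathrm{Cap}(B)\geq c\,m(B)\,N(1/m(B))$, i.e. (\ref{iso2}) up to a multiplicative constant that may be normalised into $N$.

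The main obstacle is this final conversion: without the doubling hypothesis the integral $\int_0^R dw/N(w)$ can exceed $R/N(R)$ by an unbounded factor, so the $\Delta_2$ property of $\cM$ is genuinely indispensable. By contrast the semigroup step is the robust Carlen--Kusuoka--Stroock/Coulhon machinery, whose only care points are the $C^1$ regularity and the decay $\varphi(s)\to 0$ as $s\to\infty$ (from spectral calculus and transience, respectively), which legitimise the separation of variables.
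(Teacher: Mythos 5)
Your proof is correct, but it takes a genuinely different route from the paper's. The paper's own proof is modular and rests on two citations: first, by Bakry--Coulhon--Ledoux--Saloff-Coste, the Nash inequality (\ref{Nash}) implies the Orlicz--Sobolev inequality $\cE (f,f)\geq \Vert f^2\Vert _{\cM}$, i.e.\ (\ref{orlicz}); second, by Kaimanovich, that Orlicz-norm inequality is \emph{equivalent} to the isocapacitary bound (\ref{iso2}). Your argument bypasses the Orlicz--Sobolev inequality entirely: you combine the classical energy characterization $\mathrm{Cap}(B)\geq m(B)^2\big(\iint _{B\times B}G\,dm\,dm\big)^{-1}$ with the Carlen--Kusuoka--Stroock/Coulhon semigroup differential inequality applied directly to $1_B$, obtaining $\iint _{B\times B}G\,dm\,dm\leq m(B)^2\int _0^{1/m(B)}dw/N(w)$, and then use the complementarity relation $\cN ^{-1}(t)\,\cM ^{-1}(t)\asymp t$ together with the $\Delta _2$ property of $\cM$ to show $\int _0^{R}dw/N(w)\leq C\, R/N(R)$ via the geometric-series estimate $\int _0^R \cM ^{-1}(w)\,w^{-1}dw\leq C\,\cM ^{-1}(R)$. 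Each approach buys something: the paper's route also delivers the functional inequality (\ref{orlicz}) and, through Kaimanovich's equivalence, the converse implication, both of independent interest; your route is self-contained and elementary, makes transparent exactly where doubling of $\cM$ is indispensable (your final integral comparison), and yields (\ref{iso2}) up to a multiplicative constant --- which is harmless here, since the constant can be absorbed by rescaling the Young pair ($\cN (t)\mapsto \cN (t/c)$ keeps the complementary function doubling) and every application in the paper, e.g.\ (\ref{isoperim}) and (\ref{CapPsi}), carries constants anyway. Two minor technical remarks: the decay $\varphi (s)\to 0$ already follows from your differential inequality (if $\varphi \geq \epsilon >0$ forever then $\varphi '\leq -2\epsilon N(\epsilon /m(B)^2)$ forces $\varphi \to -\infty$), so transience need not be invoked there; and in the energy step one should note that for a transient regular Dirichlet form polar sets are $m$-null, so $G\phi _B=1$ quasi-everywhere on $B$ suffices to justify $\int _B G\phi _B\,dm=m(B)$.
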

\begin{flushleft}
The proof of Theorem \ref{capbound} is based on two crucial ingredients:
\end{flushleft}
\begin{flushleft}
1. The inequality (\ref{Nash}) implies that
\begin{align}
\cE (f,f)\geq \Vert f^2\Vert _{\cM},\label{orlicz}
\end{align}
where $\Vert f^2\Vert _{\cM}$ denotes the Orlicz norm of the function $f$ associated with the Young function $\cM$, see \textsc{Bakry, Coulhon, Ledoux and Saloff-Coste} \cite[Theorem 10.5]{sobolev}.
\end{flushleft}
\begin{flushleft}
2. The inequalities (\ref{iso2}) and (\ref{orlicz}) are equivalent statements, 
see \textsc{Kaimanovich} \cite[Theorem 3.1]{Kaimanovich}.
\end{flushleft}
\begin{corollary}\label{CorPsi}
In the setting of Theorem \ref{capbound}, let $\psi$ be a regularly varying Bernstein function. Assume that it has the same index $\beta $ at $0$ and at $\infty$. Assume that $\cE  (f,f)=(\cL f,f)$ satisfies the following Nash-type inequality
\begin{align}
\cE  (f,f)\geq \Vert f\Vert ^{2+\frac{4}{d}}_2,\qquad \Vert f\Vert _1=1,\label{clasNash}
\end{align}  
with $d\geq 1$.
Suppose that $0<\beta < d/2$, then the Dirichlet form $\cE _\psi (f,f)=(\psi(\cL ) f,f)$ is transient. Let $\mathrm{Cap} _\psi (B)$ be the capacity of $B$ associated with $\cE _\psi (f,f)$. Then
\begin{align}
\mathrm{Cap}_\psi (B)\geq m(B)\psi \Big( \frac{1}{m(B)^{2/d}} \Big).\label{CapPsi}
\end{align}
In particular, choosing $\psi (s)=s^{\alpha /2}$, $0<\alpha <\min \{2, d\}$, we obtain
\begin{align*}
\mathrm{Cap}_\psi (B)\geq m(B)^{1-\alpha /d}.
\end{align*}
\end{corollary}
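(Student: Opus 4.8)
The plan is to reduce the assertion (\ref{CapPsi}) to a single application of Theorem \ref{capbound}. For that it suffices to establish a Nash-type inequality of the form (\ref{Nash}) for the subordinated form $\cE_\psi$ with profile $N_\psi(x)=\psi(x^{2/d})$. Indeed, the classical inequality (\ref{clasNash}) is precisely (\ref{Nash}) for $\cE=(\cL f,f)$ with $N_0(x)=x^{2/d}$, and Theorem \ref{capbound} applied to the profile $N_\psi$ outputs $\mathrm{Cap}_\psi(B)\geq m(B)\,N_\psi\big(1/m(B)\big)=m(B)\,\psi\big(m(B)^{-2/d}\big)$, which is exactly (\ref{CapPsi}); the specialization $\psi(s)=s^{\alpha/2}$ gives $N_\psi(x)=x^{\alpha/d}$ and hence $m(B)^{1-\alpha/d}$.

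The first and main step is to deduce the Nash-type inequality for $\cE_\psi$ directly from the Bernstein representation $\psi(\lambda)=\int_0^\infty(1-e^{-\lambda s})\,\nu(\ud s)$. Applying the spectral calculus to $\cL$ (a drift or killing term only adds nonnegative contributions and may be discarded for a lower bound) gives
\begin{align*}
\cE_\psi(f,f)=(\psi(\cL)f,f)=\int_0^\infty\big(\Vert f\Vert_2^2-(P_sf,f)\big)\,\nu(\ud s),
\end{align*}
where $P_s=e^{-s\cL}$. The classical inequality (\ref{clasNash}) yields the on-diagonal/ultracontractivity bound $(P_sf,f)=\Vert P_{s/2}f\Vert_2^2\leq Cs^{-d/2}$ for $\Vert f\Vert_1=1$ (the easy, Nash-implies-ultracontractivity direction). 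Writing $M=\Vert f\Vert_2^2$ and choosing $s_0\asymp M^{-2/d}$ so that $Cs_0^{-d/2}\leq M/2$, one has $\Vert f\Vert_2^2-(P_sf,f)\geq M/2$ for $s\geq s_0$, whence
\begin{align*}
\cE_\psi(f,f)\geq \tfrac{M}{2}\,\nu\big([s_0,\infty)\big).
\end{align*}

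The remaining point, where the hypotheses on $\psi$ are used, is to identify the L\'evy tail with $\psi$. Because $\psi$ is regularly varying with the same index $\beta$ at $0$ and at $\infty$, a Karamata--Tauberian argument (cf. \cite{Bingham}) gives $\nu([s,\infty))\asymp\psi(1/s)$ uniformly as $s\downarrow0$ and as $s\to\infty$; both endpoints are genuinely needed, since $s_0\asymp M^{-2/d}$ sweeps out all of $(0,\infty)$ as $M$ varies. This yields $\cE_\psi(f,f)\geq c\,M\,\psi(M^{2/d})=\Vert f\Vert_2^2\,N_\psi(\Vert f\Vert_2^2)$, the desired instance of (\ref{Nash}). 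The constraint $0<\beta<d/2$ is exactly what makes $N_\psi(x)=\psi(x^{2/d})$ regularly varying of index $2\beta/d<1$, so that $N_\psi(x)=o(x)$; this simultaneously forces transience of $\cE_\psi$ (through integrability at infinity of $[\psi^{-1}(1/t)]^{d/2}$) and makes $\cN:=N_\psi^{-1}$ a bona fide Young function of index $d/(2\beta)>1$, whose complement $\cM$ is then regularly varying of finite positive index $\tfrac{d/(2\beta)}{d/(2\beta)-1}$ and therefore doubling.

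With transience and the doubling complementary pair $(\cN,\cM)$ in hand, the hypotheses of Theorem \ref{capbound} are satisfied, and it delivers (\ref{CapPsi}); the stable case follows by inserting $\psi(s)=s^{\alpha/2}$. The hard part is the Tauberian identification $\nu([s,\infty))\asymp\psi(1/s)$ at \emph{both} ends together with the verification that $N_\psi$ fits the Orlicz framework of Theorem \ref{capbound}; everything else is bookkeeping. One may alternatively route the same estimate through the subordinated heat kernel, $p_t^\psi(x,x)=\int_0^\infty p_s(x,x)\,\eta_t(\ud s)\leq C\int_0^\infty s^{-d/2}\eta_t(\ud s)\asymp[\psi^{-1}(1/t)]^{d/2}$, and convert back to (\ref{Nash}) via Coulhon's correspondence, in which case $\beta<d/2$ reappears as the convergence condition for the negative moment of the subordinator.
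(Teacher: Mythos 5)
Your proposal is correct in substance and follows the same skeleton as the paper's proof: establish a Nash-type inequality for $\cE_\psi$ with profile $N_\psi(x)=\psi(x^{2/d})$, check that $\cN=N_\psi^{-1}=(\psi^{-1})^{d/2}$ is (equivalent to) a Young function of index $d/(2\beta)>1$ whose complementary function $\cM$ is regularly varying of index $d/(d-2\beta)$ and hence doubling, get transience from the ultracontractive decay $[\psi^{-1}(1/t)]^{d/2}$, and then invoke Theorem \ref{capbound}. The genuine difference is the middle step. The paper disposes of it by citation: \cite{Maheux} (see also \cite{schilling1}) for the implication from (\ref{clasNash}) to $\cE_\psi(f,f)\geq \Vert f\Vert_2^2\,\psi(\Vert f\Vert_2^{2/d})$, and Coulhon \cite{coulhon} for transience. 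You instead prove the implication from scratch: the Bernstein representation and spectral calculus give $\cE_\psi(f,f)=\int_0^\infty\big(\Vert f\Vert_2^2-(P_sf,f)\big)\,\nu(\ud s)$ up to nonnegative drift/killing terms, Nash-implies-ultracontractivity gives $(P_sf,f)\leq Cs^{-d/2}$ when $\Vert f\Vert_1=1$, cutting the integral at $s_0\asymp\Vert f\Vert_2^{-4/d}$ reduces everything to the L\'evy-tail bound $\nu([s,\infty))\geq c\,\psi(1/s)$, and that bound follows from Karamata's Tauberian theorem plus the monotone density theorem applied at both endpoints (with positivity and monotonicity filling in the compact middle range). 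This is a legitimate, self-contained derivation -- essentially the mechanism behind the cited subordination results -- and it makes visible exactly where the hypothesis that $\psi$ has the same index at $0$ and at $\infty$ enters; the paper's route buys brevity and slightly wider coverage.

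The one concrete gap is the boundary case $\beta=1$, which the hypothesis $0<\beta<d/2$ permits when $d\geq 3$. Your Tauberian identification $\nu([s,\infty))\asymp\psi(1/s)$ requires $\beta<1$: the monotone density step needs index $1-\beta>0$, and for $\beta=1$ the relation genuinely fails (e.g. for the pure drift $\psi(\lambda)=\lambda$, where $\nu\equiv 0$ yet the conclusion of the corollary is the classical Nash isocapacitary bound and still true). Since a Bernstein function always has regular-variation index at most $1$, this is the only excluded case; it does not arise in any of the paper's applications ($\psi(s)=s^{\alpha/2}$ with $\alpha<\min\{2,d\}$, so $\beta=\alpha/2<1$), but as a proof of the corollary as stated your argument needs either the extra assumption $\beta<1$ or a separate (trivial) treatment of $\beta=1$, whereas the citation of \cite{Maheux} covers all Bernstein $\psi$ uniformly.
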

\begin{proof}
According to \cite{Maheux} (see also \cite{schilling1}) the inequality (\ref{clasNash}) yields
\begin{align*}
\cE _\psi (f,f)\geq \Vert f\Vert _2^{2}\psi (\Vert f\Vert ^{2/d}_2),\qquad \Vert f\Vert _1=1
\end{align*}  
and Theorem \ref{capbound} gives the result. Indeed, in our case the function $\cN (t) =\big(\psi ^{-1}(t)\big)^{d/2}$ is regularly varying of index $d/2\beta >1$. Hence it is a Young function whose complementary function $\cM$ is regularly varying of index $\gamma $,
\begin{align*}
\frac{1}{\gamma} + \frac{2\beta}{d} =1. 
\end{align*}
For all of this see \cite[Section 1.8.4, Theorem 1.8.10]{Bingham}. In particular, $\cM$ is doubling. At last the transience of $\cE _\psi (f,f)$ follows by the ultracontractivity theorem of Coulhon \cite[Proposition II.1]{coulhon}. The proof is finished.
\end{proof}
\textbf{Acknowledgements}\\
This work has been started while visiting the Bielefeld University. We would like to thank  A.~Grigor'yan, W.~Hansen, M.~Mirek, S.~Molchanov and B.~Trojan for fruitful discussions and valuable comments.
\bibliography{biblio}
\bibliographystyle{plain}
\end{document}